\newcommand\blfootnote[1]{%
	\begingroup
	\renewcommand\thefootnote{}\footnote{#1}%
	\addtocounter{footnote}{-1}%
	\endgroup}
\theoremstyle{plain}
\newtheorem{theorem}{Theorem}[section]
\newtheorem{corollary}[theorem]{Corollary}
\theoremstyle{definition}
\newtheorem{remark}[theorem]{Remark}
\newtheorem{example}[theorem]{Example}
\newtheorem{question}[theorem]{Question}
\newtheorem{problem}[theorem]{Problem}
\newcommand{\NN}{\mathbb{N}}
\newcommand{\ZZ}{\mathbb{Z}}
\newcommand{\CC}{\mathbb{C}}
\newcommand{\DD}{\mathbb{D}}
\newcommand{\Fc}{\mathcal{F}}
\newcommand{\Lc}{\mathcal{L}}
\newcommand{\cl}{\overline}
\newcommand{\dist}{\operatorname{dist}}
\newcommand{\lspan}{\operatorname{span}}
\newcommand{\supp}{\operatorname{supp}}
\newcommand{\zero}{\mathfrak{o}}% para la raíz de los árboles ...
\renewcommand{\hat}{\widehat}% para el operador conjugado en $c_0(T)$ ...
\newcommand{\Chi}{\text{Chi}}% para los "hijos"/"Children"
\newcommand{\dx}{\partial}% para las "moléculas"
\newcommand{\sbf}{\boldsymbol}
\begin{document}
\begin{center}
	\begin{LARGE}
		{\bf Frequently hypercyclic composition operators\\ on the little Lipschitz space of a rooted tree}
	\end{LARGE}
\end{center}

\vspace*{-0.2in}

\begin{center}
	\begin{Large}
		Antoni L\'opez-Mart\'inez\blfootnote{\textbf{2020 Mathematics Subject Classification}: 47A16, 47B37, 05C05, 05C63.\\ \textbf{Key words and phrases}: Composition operators, Lipschitz space of a tree, Frequent hypercyclicity.\\  \textbf{Journal-ref}: Mediterranean Journal of Mathematics, Volume 23, article number 87, (2026).\\ \textbf{DOI}: https://doi.org/10.1007/s00009-026-03075-6}
	\end{Large}
\end{center}

\vspace*{-0.2in}

\begin{abstract}
	We characterize the strictly increasing symbols $\varphi:\mathbb{N}_0\longrightarrow\mathbb{N}_0$ whose composition operators~$C_{\varphi}$ satisfy the Frequent Hypercyclicity Criterion on the little Lipschitz space $\mathcal{L}_0(\mathbb{N}_0)$. With this result we continue the recent research about this kind of spaces and operators, but our approach relies on establishing a natural isomorphism between the Lipschitz-type spaces over rooted trees and the classical spaces $\ell^{\infty}$ and $c_0$. Such isomorphism provides an alternative framework that simplifies and allows to improve many previous results about these spaces and the operators defined there.
\end{abstract}

\vspace*{-0.2in}

\section{Introduction}

Within the domains of Functional Analysis and Operator Theory, the field of Linear Dynamics has established itself over the past five decades as a highly active and influential area of research, with a primary focus on the notion of {\em hypercyclicity} (i.e.\ existence of a dense orbit). The first examples of hypercyclic operators acting on Banach spaces were the {\em Rolewicz operators} \cite{Rolewicz1969}. These are the multiples $\lambda B$ for any $|\lambda|>1$ of the backward shift $B:X\longrightarrow X$. We recall that $B$ acts on the classical sequence spaces $X=\ell^p(I)$ or $c_0(I)$, indexed by the set of positive integers $I=\NN$ or equivalently by the set of non-negative integers $\NN_0:=\NN\cup\{0\}$, as follows: for each $x=(x_j)_{j\in I} \in X$ one has
\[
x = (x_j)_{j\in I} \longmapsto B(x) := (x_{j+1})_{j\in I}.
\]
Since then, the more general {\em weighted shifts} on sequence spaces indexed by $\NN$ or $\NN_0$ have become one of the main sources of examples in Linear Dynamics (see \cite[Chapter~4]{GrPe2011_book} and the references therein).

These classical weighted shifts were later generalized into the so-called {\em weighted shifts on trees}, for which the underlying spaces are again sequence Banach (or even Fr\'echet) spaces but this time indexed by what is called a {\em directed tree} $T$ (see Section~\ref{Sec_2:preliminar} for a precise definition). In fact, and among others, references such as \cite{AA2024,GrPa2023,GrPa2023_arXiv,JabJungStoc2012_book,LopezPa2025_JDE_shifts,Martinez2017} have focused on this topic by letting the underlying spaces to be the respective $\ell^p(T)$ or $c_0(T)$, being these natural generalizations of $\ell^p(\NN_0)$ and $c_0(\NN_0)$.

Recently, two apparently new sequence Banach spaces indexed by a tree~$T$ have appeared in the literature: the {\em Lipschitz space} $\Lc(T)$ and the {\em little Lipschitz space} $\Lc_0(T)$. These Banach spaces were introduced in the 2010 paper \cite{CoEas2010} and, since then, many works have been published studying how shift, multiplication and composition operators behave there (see \cite{ACoEas2011,ACoEas2012,ACoEas2013,ACoEas2014,CoEas2012,CoMar2017,ColonnaMar2025_MJM_composition,MarRi2020}). The main objective of this paper is to provide examples of frequently hypercyclic composition operators on the little Lipschitz space of $\NN_0$, addressing the following open question left in  \cite[Subsection~6.1]{ColonnaMar2025_MJM_composition}:

\begin{question}\label{Q:first}
	Are there some  both sufficient and necessary conditions for the hypercyclicity of a composition operator $C_{\varphi}$ acting on a little Lipschitz space $\Lc_0(T)$ of a rooted tree $T$?
\end{question}

In this paper we solve Question~\ref{Q:first} for the particular case of composition operators whose symbols are strictly increasing self-maps of the set of non-negative integers. Our main result shows that these operators are hypercyclic if and only if they satisfy the Frequent Hypercyclicity Criterion.

The paper is organized as follows. In Section~\ref{Sec_2:preliminar} we introduce the notation used and we establish an isometric isomorphism between the Lipschitz-type spaces $\Lc(T)$ and $\Lc_0(T)$, of each rooted tree~$T$, and the spaces $\ell^{\infty}(T)$ and $c_0(T)$. This isomorphism allows us to compute the norms of the composition, multiplication and shift operators on $\Lc(T)$ and $\Lc_0(T)$, improving all the norm estimates given in the literature for these operators (see Subsections~\ref{SubSec_2.2:composition} and \ref{SubSec_2.3:mult_shift}). In Section~\ref{Sec_3:existence} we again use the established isomorphism, together with the Frequent Hypercyclicity Criterion, in order to characterize those strictly increasing symbols $\varphi:\NN_0\longrightarrow\NN_0$ for which the composition operators~$C_{\varphi}$ are hypercyclic on $\Lc_0(\NN_0)$. We conclude the paper in Section~\ref{Sec_4:conclusions}, where we briefly discuss further applications of the isomorphism established above and formulate several open problems.

\section{Notation, a useful isomorphism, and some consequences}\label{Sec_2:preliminar}

%\subsection{Rooted trees}\label{SubSec_2.1:trees}

Following \cite{ColonnaMar2025_MJM_composition}, a {\em directed tree} (or just a {\em tree}) will be a locally finite, connected, and simply-connected graph $T$ that as a set we identify with the collection of its vertices. We will say that two distinct vertices $v\neq w$ are {\em adjacent}, and we will denote it by $v \sim w$, if $v$ and $w$ are connected by an edge of the graph $T$. The {\em degree} of a vertex $v \in T$ is the number of vertices in $T$ that are adjacent to $v$, and a~{\em path} between two vertices $v\neq w$ will be a finite sequence of distinct vertices $[v,v_1,v_2,...,v_{k-1},w]$~in the tree~$T$ and fulfilling that $v \sim v_1 \sim v_2 \sim \cdots \sim v_{k-1} \sim w$ (note that in a connected and simply-connected graph there always exists a unique path between two distinct vertices).

Also as in \cite{ColonnaMar2025_MJM_composition}, every tree $T$ considered in this paper will be a {\em rooted tree}, i.e.\ there will be a distinguished vertex $\zero \in T$ that will be called the {\em root} of~$T$. Given two vertices $v\neq w$ in a rooted tree~$T$ we will say that $v$ is a {\em descendant} of $w$, or that $w$ is an {\em ancestor} of $v$, if the vertex $w$ lies in the unique path between~$\zero$ and~$v$. We will write $T^* := T\setminus\{\zero\}$ and given any vertex $v \in T^*$ we will denote by~$v^-$ the unique vertex in $T$ that is ``an ancestor of'' and ``adjacent to'' $v$. In this case $v^-$ is called the {\em parent} of~$v$, and~$v$ is said to be a {\em child} of~$v^-$. In addition, given any vertex $v \in T$ we will denote by $\Chi(v)$ the set formed by all the children of $v$ (note that $v \in T$ could be the root $\zero$ this time).

Given $v,w \in T$ we will denote by $\dist(v,w)$ the number of edges in the unique path between~$v$~and~$w$, calling it the {\em distance} between $v$ and $w$, and setting $\dist(v,w)=0$ whenever $v=w$. Moreover, for each $v \in T$ we will denote by $|v|:=\dist(\zero,v)$ the distance between the root $\zero$ and the vertex $v$. All the trees considered in this paper will be rooted trees for which only the root $\zero$ may have degree equal to one (so that all these trees will be infinite). Finally, also as in \cite{ColonnaMar2025_MJM_composition}, for each $v \in T$ we will denote by $S_v$ the {\em sector of $v$}, which is the set consisting of $v$ and all its descendants (note that $S_{\zero} = T$).

A {\em function}~$f$ on a tree~$T$ will be a complex-valued function defined on the set of its vertices. The, the {\em Lipschitz} and the {\em little Lipschitz spaces} of a rooted tree $T$ are defined as
\[
\Lc(T) := \left\{ f:T\longrightarrow\CC \ ; \ \sup_{v \in T^*} |f(v)-f(v^-)|<\infty \right\},
\]
\[
\Lc_0(T) := \left\{ f:T\longrightarrow\CC \ ; \ \lim_{|v|\to\infty} |f(v)-f(v^-)|=0 \right\}.
\]
It was proved in \cite{CoEas2010} that $\Lc(T)$ is a Banach space with the norm
\begin{equation}\label{eq:norm_+}
	\|f\| := |f(\zero)| + \sup_{v \in T^*} |f(v)-f(v^-)| \quad \text{ for each } f \in \Lc(T).
\end{equation}
It was also proved in \cite{CoEas2010} that $\Lc_0(T)$ is always a closed and separable subspace of $\Lc(T)$. However, following~\cite{ColonnaMar2025_MJM_composition,MarRi2020}, we will use in both spaces $\Lc(T)$ and $\Lc_0(T)$ the equivalent norm
\begin{equation}\label{eq:norm_L}
	\|f\|_{\Lc} := \max\left\{ |f(\zero)| , \sup_{v \in T^*} |f(v)-f(v^-)| \right\} \quad \text{ for each } f \in \Lc(T).
\end{equation}
The norm $\|\cdot\|_{\Lc}$ allows us to find an interesting \textbf{isometric isomorphism}, which will be our main tool in this paper (see Subsections~\ref{SubSec_2.2:composition}~and~\ref{SubSec_2.3:mult_shift} and Section~\ref{Sec_3:existence}):

\begin{remark}[\textbf{A useful isomorphism}]\label{Rem:1-isom}
	Given a rooted tree $T$, and following \cite{GrPa2023,GrPa2023_arXiv,JabJungStoc2012_book,LopezPa2025_JDE_shifts}, consider the well-known sequence spaces
	\[
	\ell^{\infty}(T) := \left\{ x=(x_v)_{v\in T} \in \CC^{T} \ ; \ \sup_{v \in T} |x_v| < \infty \right\}
	\]
	and
	\[
	c_0(T) := \left\{ x=(x_v)_{v\in T} \in \CC^{T} \ ; \ \lim_{|v|\to\infty} |x_v| = 0 \right\},%=\left\{ x=(x_v)_{v\in V} \in \CC^{T} \ ; \ \forall\eps>0 \exists F \subset T \text{ finite s.t. } |x_v|<\eps \text{ for } v \in V\setminus F \right\}
	\]
	both of them endowed with the {\em supremum norm}
	\[
	\|x\|_{\infty} := \sup_{v \in T} |x_v| \quad \text{ for each } x=(x_v)_{v\in T} \in \ell^{\infty}(T).
	\]
	The spaces $(\ell^{\infty}(T),\|\cdot\|_{\infty})$ and $(c_0(T),\|\cdot\|_{\infty})$ are sequence Banach spaces, and it is not hard to check that the map $D : (\Lc(T),\|\cdot\|_{\Lc}) \longrightarrow (\ell^{\infty}(T),\|\cdot\|_{\infty})$, defined for each $f \in \Lc(T)$ as
	\[
	f \longmapsto D(f) = ([D(f)]_v)_{v \in V} \in \ell^{\infty}(T) \quad \text{ with } \quad [D(f)]_v :=
	\begin{cases}
		f(\zero), & \text{ if } v=\zero, \\[5pt]
		f(v)-f(v^-), & \text{ if } v \in T^*,
	\end{cases}
	\]
	is indeed an isometric isomorphism. In fact, the inverse operator $D^{-1} : (\ell^{\infty}(T),\|\cdot\|_{\infty}) \longrightarrow (\Lc(T),\|\cdot\|_{\Lc})$ can be easily computed and expressed for each $x=(x_v)_{v \in T} \in \ell^{\infty}(T)$ as
	\[
	x \longmapsto D^{-1}(x) \in \Lc(T) \quad \text{ with } \quad [D^{-1}(x)](v) = \left( \sum_{w \in [\zero,v_1,v_2,...,v]} x_w \right) \quad \text{ for every } v \in T,
	\]
	where $[\zero,v_1,v_2,...,v]$ is the unique path in $T$ between the root $\zero$ and the vertex $v$. Since we will focus on the little Lipschitz space we will use the restriction $D : (\Lc_0(T),\|\cdot\|_{\Lc})\longrightarrow(c_0(T),\|\cdot\|_{\infty})$, which by abuse of notation will still be denoted by $D$. Note that $D$ is also an isometric isomorphism between $(\Lc_0(T),\|\cdot\|_{\Lc})$ and $(c_0(T),\|\cdot\|_{\infty})$, and that the expression of the respective inverse map $D^{-1}:(c_0(T),\|\cdot\|_{\infty}) \longrightarrow (\Lc_0(T),\|\cdot\|_{\Lc})$ coincides with that of $D^{-1} : (\ell^{\infty}(T),\|\cdot\|_{\infty}) \longrightarrow (\Lc(T),\|\cdot\|_{\Lc})$.
\end{remark}

Considering the mathematical objects introduced in Remark~\ref{Rem:1-isom}, let us establish some further notation that will help us to clarify how $D$ and $D^{-1}$ act on $\Lc(T)$ and $\Lc_0(T)$:
\begin{enumerate}[--]
	\item For each $w \in T$ we will denote by $e^{(w)}:=(\delta_{w,v})_{v\in T} \in c_0(T)$ the $w$-th element of the canonical Schauder basis of $c_0(T)$, i.e.\ $\delta_{w,v}=1$ when $w=v$ and $\delta_{w,v}=0$ otherwise. We thus have that
	\[
	D(\chi_{S_w})=e^{(w)} \quad \text{ or equivalently } \quad D^{-1}(e^{(w)})=\chi_{S_w},
	\]
	where $S_w$ is the sector of the vertex $w$, and $\chi_{S_w}:T\longrightarrow\CC$ is the corresponding indicator function.
		
	\item For each $w \in T$ we will use the notation $\dx^{(w)} := e^{(w)} - \sum_{u\in\Chi(w)} e^{(u)} \in c_0(T)$. We thus have that
	\[
	D(\chi_{\{w\}})=\dx^{(w)} \quad \text{ or equivalently } \quad D^{-1}(\dx^{(w)})=\chi_{\{w\}},
	\]
	where $\chi_{\{w\}}:T\longrightarrow\CC$ is the corresponding indicator function.
\end{enumerate}
Moreover, the following linear subspaces of $c_0(T)$, defined as
\[
c_{00}(T) := \lspan\{ e^{(w)} \ ; \ w \in T \}%\lspan\left\{ e^{(w)} \ ; \ w \in T \right\},
\]
and
\[
\Sigma_{00}(T) := \lspan\{ \dx^{(w)} \ ; \ w \in T \},%\lspan\left\{ \dx^{(w)} \ ; \ w \in T \right\},
\]
will play a fundamental role in our main result (see Theorem~\ref{The:NN_0} below). Actually, if for each vector $x=(x_v)_{v\in T} \in \ell^{\infty}(T)$ we denote its {\em support} by $\supp(x):=\{ v \in T \ ; \ x_v\neq 0 \}$ and we define
\[
|\supp(x)| :=
\begin{cases}
	\sup\left\{ |v| \ ; \ v \in \supp(x) \right\}, & \text{ if } \supp(x)\neq \emptyset, \\[5pt]
	0, & \text{ if } \supp(x)=\emptyset,
\end{cases}
\]
then it is not hard to check the equalities
\[
c_{00}(T) = \left\{ x=(x_v)_{v\in T} \in c_0(T) \ ; \ |\supp(x)|<\infty \right\}
\]
and
\[
\Sigma_{00}(T) = \left\{ x=(x_v)_{v\in T} \in c_{00}(T) \ ; \ \textstyle \sum_{w \in [\zero,v_1,v_2,...,v]} x_w = 0 \ \text{ whenever } |v|=|\supp(x)| \right\},
\]
where $[\zero,v_1,v_2,...,v]$ is again the unique path in $T$ between the root $\zero$ and the vertex $v$. These last alternative expressions for $c_{00}(T)$ and $\Sigma_{00}(T)$ allow us to easily check that they both are dense linear subspaces of $c_0(T)$. As a consequence, having in mind the isomorphism established in Remark~\ref{Rem:1-isom} together with the direct computations $D^{-1}(e^{(w)})=\chi_{S_w}$ and $D^{-1}(\dx^{(w)})=\chi_{\{w\}}$ for each $w \in T$, one can deduce the results \cite[Theorem~2.3]{CoEas2010} and \cite[Lemma~2.3]{ColonnaMar2025_MJM_composition} in a direct way.

Other results from the previous works on these Lipschitz spaces \cite{ACoEas2011,ACoEas2012,ACoEas2013,ACoEas2014,CoEas2010,CoEas2012,CoMar2017,ColonnaMar2025_MJM_composition,MarRi2020} can also be directly obtained via the isometric isomorphism established in Remark~\ref{Rem:1-isom}. In fact, the findings about their Banach-space structure, such as \cite[Proposition~3.1]{CoMar2017} where it is proved that $\Lc(T)$ is not separable, or \cite[Proposition~4.2]{CoMar2017} where it is proved that the dual of $\Lc_0(T)$ and the pre-dual of $\Lc(T)$ is an $\ell^1$-type space, are now obvious. Also, the results about certain inequalities satisfied by every function $f \in \Lc(T)$, such as \cite[Lemma~3.4]{CoEas2010} or \cite[Lemma~2.2]{MarRi2020}, are indeed direct if we note that $f(v)$ is a sum of $|v|+1$ coordinates of a vector in $\ell^{\infty}(T)$, namely $D(f) \in \ell^{\infty}(T)$. In the aforementioned works, the map~$D$ was introduced as a tool to compute the $\|\cdot\|_{\Lc}$-norm of elements in $\Lc(T)$ and $\Lc_0(T)$, but none of these references addressed the natural isometric isomorphism, nor the explicit form of the inverse operator $D^{-1}$ as presented in Remark~\ref{Rem:1-isom}. Using both $D$ and $D^{-1}$ we can obtain alternative expressions for the operators defined in $\Lc(T)$ and $\Lc_0(T)$. See Section~\ref{Sec_4:conclusions} for more comments.

\subsection{Composition operators}\label{SubSec_2.2:composition}

Given a rooted tree~$T$ and a map $\varphi:T\longrightarrow T$, if we denote by $\Fc$ the set of all functions $f:T\longrightarrow \CC$, we have that $C_{\varphi}:\Fc\longrightarrow\Fc$ defined as $C_{\varphi}(f):=f \circ \varphi$ for each $f \in \Fc$ is a linear map. This map~$C_{\varphi}$ is called the {\em composition operator} with symbol~$\varphi$. By the Closed Graph Theorem $C_{\varphi}$ is bounded in~$\Lc(T)$ if and only if $C_{\varphi}(\Lc(T)) \subset \Lc(T)$. By \cite[Theorem~3.2]{ACoEas2014} this holds when~$\varphi$ is {\em Lipschitz}, i.e.\ when
\[
L_{\varphi} := \sup_{v\neq w \in T} \frac{\dist(\varphi(v),\varphi(w))}{\dist(v,w)} < \infty,
\]
where $L_{\varphi}$ is said to be the {\em Lipschitz constant} of $\varphi$. The Closed Graph Theorem shows again that $C_{\varphi}$ is bounded in $\Lc_0(T)$ if and only if $C_{\varphi}(\Lc_0(T)) \subset \Lc_0(\Lc_0(T))$. However, the fact that $\varphi$ is Lipschitz is not sufficient to ensure this last inclusion (though the Lipschitz condition is necessary) and a complete characterization of when $C_{\varphi}$ is bounded in $\Lc_0(T)$ was established in \cite[Section~3]{ColonnaMar2025_MJM_composition}. When $C_{\varphi}$ is a bounded operator, the following observation is a direct consequence of Remark~\ref{Rem:1-isom}:

\begin{remark}[\textbf{A useful conjugacy}]\label{Rem:2-conjugacy}
	Given a rooted tree $T$ and a map $\varphi:T\longrightarrow T$ such that~$C_{\varphi}$ is a bounded operator in $\Lc(T)$, then one can find a natural $\ell^{\infty}(T)$-operator that is {\em topologically conjugate} to $C_{\varphi}$ (see \cite[Section~1.1]{GrPe2011_book} for the precise definition of ``conjugacy''). In fact, letting
	\[
	\hat{C_{\varphi}}(x) := (D\circ C_{\varphi}\circ D^{-1})(x) \quad \text{ for each } x \in \ell^{\infty}(T),
	\]
	where the map $D$ is the isomorphism established in Remark~\ref{Rem:1-isom}, we have that the diagram
	\begin{equation*}
		\begin{CD}
			\Lc(T) @>{C_{\varphi}}>> \Lc(T) \\
			@V{D}VV  @VV{D}V \\
			\ell^{\infty}(T) @>{\hat{C_{\varphi}}}>> \ell^{\infty}(T)
		\end{CD}
	\end{equation*}
	is commutative, so that $C_{\varphi}$ and $\hat{C_{\varphi}}$ are conjugate via the isometric isomorphism $D$. Note that the same still holds if we replace $\Lc(T)$ and $\ell^{\infty}(T)$ with the respective spaces $\Lc_0(T)$ and $c_0(T)$, when $C_{\varphi}$ is bounded in $\Lc_0(T)$. Thus, these operators share all the dynamical properties that are preserved under conjugacy such as hypercyclicity, density of periodic points and Devaney chaos among others. 
\end{remark}

Using Remarks~\ref{Rem:1-isom} and \ref{Rem:2-conjugacy} we can improve \cite[Theorem~3.7]{ColonnaMar2025_MJM_composition}:

\begin{corollary}\label{Cor:norm}
	Let $\varphi$ be a self-map of a rooted tree $T$. If the composition operator $C_{\varphi}$ is bounded in the space $\Lc(T)$, then its operator norm fulfills the equality $\|C_{\varphi}\|_{\Lc} = \max\{ 1+|\varphi(\zero)| , L_{\varphi} \}$, where $L_{\varphi}$ is the Lipschitz constant of $\varphi$. Moreover, if $C_{\varphi}$ is bounded in the space $\Lc_0(T)$, then the same equality holds for its respective norm as an operator on $\Lc_0(T)$.
\end{corollary}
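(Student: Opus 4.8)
The plan is to transport the entire computation to the sequence-space side via the isometric isomorphism $D$ of Remark~\ref{Rem:1-isom}. Since $D:(\Lc(T),\|\cdot\|_{\Lc})\to(\ell^{\infty}(T),\|\cdot\|_{\infty})$ is an isometric isomorphism and, by Remark~\ref{Rem:2-conjugacy}, the operator $\hat{C_{\varphi}}=D\circ C_{\varphi}\circ D^{-1}$ is conjugate to $C_{\varphi}$ through $D$, and because conjugation by a surjective isometry preserves operator norms, we get $\|C_{\varphi}\|_{\Lc}=\|\hat{C_{\varphi}}\|_{\infty}$. Thus it suffices to show $\|\hat{C_{\varphi}}\|_{\infty}=\max\{1+|\varphi(\zero)|,L_{\varphi}\}$, and the $\Lc_0(T)$ statement will then follow verbatim once we check that the extremal vectors produced below lie in $c_0(T)$.

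First I would write $\hat{C_{\varphi}}$ explicitly. For $x\in\ell^{\infty}(T)$ set $f:=D^{-1}(x)$, so that $f(v)=\sum_{w\in[\zero,\dots,v]}x_w$. Then $[\hat{C_{\varphi}}(x)]_{\zero}=(f\circ\varphi)(\zero)=f(\varphi(\zero))$ is a sum of exactly $1+|\varphi(\zero)|$ coordinates of $x$, while for $v\in T^*$ one has $[\hat{C_{\varphi}}(x)]_v=f(\varphi(v))-f(\varphi(v^-))$, which—after cancelling the common initial segment of the two paths $[\zero,\dots,\varphi(v)]$ and $[\zero,\dots,\varphi(v^-)]$—becomes a $\pm1$-signed sum of exactly $\dist(\varphi(v),\varphi(v^-))$ coordinates of $x$. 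Taking absolute values yields $|[\hat{C_{\varphi}}(x)]_{\zero}|\le(1+|\varphi(\zero)|)\|x\|_{\infty}$ and $|[\hat{C_{\varphi}}(x)]_v|\le\dist(\varphi(v),\varphi(v^-))\|x\|_{\infty}$, so taking the supremum over all coordinates gives the upper bound $\|\hat{C_{\varphi}}\|_{\infty}\le\max\{1+|\varphi(\zero)|,\sup_{v\in T^*}\dist(\varphi(v),\varphi(v^-))\}$.

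The key auxiliary fact is the identity $L_{\varphi}=\sup_{v\in T^*}\dist(\varphi(v),\varphi(v^-))$: writing the geodesic between any two vertices $v\neq w$ as a chain of adjacent vertices and using additivity of $\dist$ along it together with the triangle inequality, the Lipschitz quotient over an arbitrary pair is dominated by its value over parent--child pairs, while the reverse inequality is immediate since adjacent vertices are at distance one. Since $C_{\varphi}$ is assumed bounded we have $L_{\varphi}<\infty$, and because $\dist$ is integer-valued this supremum is in fact attained at some $v_0\in T^*$. It then remains to produce extremal unit vectors matching the upper bound. For the root term take $x=\sum_{w\in[\zero,\dots,\varphi(\zero)]}e^{(w)}$, a unit vector with $[\hat{C_{\varphi}}(x)]_{\zero}=1+|\varphi(\zero)|$; for the Lipschitz term take the finitely supported unit vector assigning $+1$ to the coordinates on the segment from the common ancestor toward $\varphi(v_0)$ and $-1$ to those toward $\varphi(v_0^-)$, which forces $[\hat{C_{\varphi}}(x)]_{v_0}=\dist(\varphi(v_0),\varphi(v_0^-))=L_{\varphi}$. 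Both vectors belong to $c_{00}(T)\subset c_0(T)$, so the lower bound $\|\hat{C_{\varphi}}\|_{\infty}\ge\max\{1+|\varphi(\zero)|,L_{\varphi}\}$ holds simultaneously for the $\ell^{\infty}(T)$ and the $c_0(T)$ norms, and combining it with the upper bound finishes both cases. The main obstacle I anticipate is the careful sign-bookkeeping in $[\hat{C_{\varphi}}(x)]_v$ after the path cancellation, together with pinning down the adjacent-pair description of $L_{\varphi}$; once these are settled, the norm equality drops out from matching the upper and lower bounds.
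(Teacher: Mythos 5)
Your proposal is correct and takes essentially the same route as the paper: both transfer the problem to $\hat{C_{\varphi}}=D\circ C_{\varphi}\circ D^{-1}$ via the isometric conjugacy of Remarks~\ref{Rem:1-isom} and~\ref{Rem:2-conjugacy}, and both obtain the decisive lower bound $\|\hat{C_{\varphi}}\|_{\infty}\geq L_{\varphi}$ from the very same $\pm 1$ unit vector supported on the symmetric difference of the two root-paths to $\varphi(v_0)$ and $\varphi(v_0^-)$, evaluated at the coordinate $v_0$. The only difference is that you re-derive in a self-contained way what the paper simply imports: the two-sided estimate $1+|\varphi(\zero)|\leq\|C_{\varphi}\|_{\Lc}\leq\max\{1+|\varphi(\zero)|,L_{\varphi}\}$ is quoted from \cite[Theorem~3.7]{ColonnaMar2025_MJM_composition}, and the attainment of $L_{\varphi}$ at a parent--child pair from \cite[Proposition~3.1]{ACoEas2014} (your integer-valuedness argument proves exactly this), so your version is slightly longer but needs no external references.
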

\begin{proof}
	By \cite[Theorem~3.7]{ColonnaMar2025_MJM_composition} we already have the following inequalities
	\[
	1+|\varphi(\zero)| \leq \|C_{\varphi}\|_{\Lc} \leq \max\{ 1+|\varphi(\zero)| , L_{\varphi} \},
	\]
	so it is enough to check that $\|C_{\varphi}\|_{\Lc}\geq L_{\varphi}$ when $L_{\varphi}\geq 1$. We will prove that $\|\hat{C_{\varphi}}\|_{\infty}\geq L_{\varphi}$, where~$\hat{C_{\varphi}}$ is the respective operator obtained from Remark~\ref{Rem:2-conjugacy}. This would complete the proof because both operators are topologically conjugated via the isometric isomorphism $D$ from Remark~\ref{Rem:1-isom}.
	
	By \cite[Proposition~3.1]{ACoEas2014} we can find a vertex $u \in T$ fulfilling that $\dist(\varphi(u),\varphi(u^-))=L_{\varphi}\geq 1$. Thus, let $[\zero,u_1,u_2,...,\varphi(u)]$ and $[\zero,v_1,v_2,...,\varphi(u^-)]$ be the unique paths in $T$ between the root $\zero$ and each of the vertices $\varphi(u)$ and $\varphi(u^-)$, and consider the subsets $U,V \subset T$ with $U:=\{ \zero, u_1, u_2, ..., \varphi(u) \}$ and $V:=\{ \zero, v_1, v_2, ..., \varphi(u^-) \}$. Consider $x=(x_v)_{v\in T} \in \CC^{T}$ defined as
	\[
	x_v :=
	\begin{cases}
		\phantom{-}1, & \text{ if } v \in U\setminus V, \\
		-1, & \text{ if } v \in V\setminus U, \\
		\phantom{-}0, & \text{ otherwise}.
	\end{cases}
	\]
	It is clear that $x \in c_{00}(T) \subset c_0(T) \subset \ell^{\infty}(T)$, and that $\|x\|_{\infty}=1$ because both $U\setminus V$ and $V\setminus U$ can not be empty at the same time (recall that $\varphi(u)\neq\varphi(u^-)$). We claim that $\|\hat{C_{\varphi}}(x)\|_{\infty}\geq L_{\varphi}$, which would complete the proof. In fact, from Remarks~\ref{Rem:1-isom}~and~\ref{Rem:2-conjugacy} we have that
	\[
	[(C_{\varphi} \circ D^{-1})(x)](u) = \sum_{v \in [\zero,u_1,u_2,...,\varphi(u)]} x_v \quad \text{ and } \quad [(C_{\varphi} \circ D^{-1})(x)](u^-) = \sum_{v \in [\zero,v_1,v_2,...,\varphi(u^-)]} x_v,
	\]
	and, since $\hat{C_{\varphi}}(x) := (D \circ C_{\varphi} \circ D^{-1})(x)$, we have that
	\[
	[\hat{C_{\varphi}}(x)]_u = \left( \sum_{v \in [\zero,u_1,u_2,...,\varphi(u)]} x_v \right) - \left( \sum_{v \in [\zero,v_1,v_2,...,\varphi(u^-)]} x_v \right) = \left( \sum_{v \in U\setminus V} 1 \right) - \left( \sum_{v \in V\setminus U} -1 \right).
	\]
	Since the unique path between $\varphi(u)$ and $\varphi(u^-)$ is formed by the vertices in $(U\setminus V)\cup (V\setminus U)$ and a common ancestor of $\varphi(u)$ and $\varphi(u^-)$, it is not hard to check the cardinal equality
	\[
	\text{card}(U\setminus V)+\text{card}(V\setminus U) = \dist(\varphi(u),\varphi(u^-)).
	\]
	Hence, $[\hat{C_{\varphi}}(x)]_u = \text{card}(U\setminus V)+\text{card}(V\setminus U) = \dist(\varphi(u),\varphi(u^-)) = L_{\varphi}$, and the result follows.
\end{proof}

The next example shows an operator from the class of operators considered in our main result:

\begin{example}\label{Exa:NN_0}
	Let $T$ be indexed by $\NN_0$, with $0 \in \NN_0$ as its root and with $k,l \in \NN_0$ being adjacent if and only if $|k-l|=1$. Consider $\varphi:\NN_0\longrightarrow\NN_0$ defined as $\varphi(j):=j+1$ for every $j \in \NN_0$ and note that the respective composition operator $C_{\varphi}$ is bounded in $\Lc_0(\NN_0)$ by \cite[Theorem~3.1]{ColonnaMar2025_MJM_composition}. Given any $x=(x_j)_{j\in\NN_0} \in c_0(\NN_0)$, using the map $D$ from Remark~\ref{Rem:1-isom} for this particular case we have that
	\[
	[D^{-1}(x)](j) = \sum_{k=0}^{j} x_k \quad \text{ and hence } \quad [(C_{\varphi} \circ D^{-1})(x)](j) = \sum_{k=0}^{j+1} x_k \quad \text{ for each } j \in \NN_0.
	\]
	Finally, from the conjugacy established in Remark~\ref{Rem:2-conjugacy} but for this particular case, we have that
	\[
	\hat{C_{\varphi}}(x) := (D \circ C_{\varphi} \circ D^{-1})(x) = (x_0+x_1,x_2,x_3,x_4,...).
	\]
	Hence $\hat{C_{\varphi}}=P_0+B$, where $P_0$ is the projection on the $0$-coordinate and where $B$ is the classical backward shift in $c_0(\NN_0)$ as denoted at the Introduction for the Rolewicz operators. This operator admits a representation as an infinite-matrix $A_{\varphi} = (a_{i,j})_{i,j\in\NN_0}$ when acting on $c_0(\NN_0)$ as
	\[
	A_{\varphi} =
	\begin{matrix}
		\text{\tiny $i=0$} \\ \text{\tiny $i=1$} \\ \text{\tiny $i=2$} \\ \vdots \\ \text{\tiny $i$} \\ \vdots
	\end{matrix}
	\begin{pmatrix}
		\overset{\text{\tiny $j=0$}}{1} & \overset{\text{\tiny $j=1$}}{1} & \overset{\text{\tiny $j=2$}}{0} & \overset{\text{\tiny $j=3$}}{0} & 0 & \overset{\text{\tiny $j=i+1$}}{0} & 0 & \cdots \\
		0 & 0 & 1 & 0 & 0 & 0 & 0 & \cdots \\
		0 & 0 & 0 & 1 & 0 & 0 & 0 & \cdots \\
		\vdots & \vdots & \vdots & \vdots & \ddots & \vdots & \vdots & \vdots \\
		0 & 0 & 0 & 0 & 0 & 1 & 0 & \cdots \\
		\vdots & \vdots & \vdots & \vdots & \vdots & \vdots & \ddots & \vdots
	\end{pmatrix},
	\]
	with two consecutive $1$'s in the $0$-row, and then a $1$ in the column $j=i+1$ of each $i$-th row for $i \in \NN$.
\end{example}

\subsection{Multiplication and the backward shift operators}\label{SubSec_2.3:mult_shift}

The main objective of this paper is to exhibit simple examples of frequently hypercyclic composition operators on the little Lipschitz space of a rooted tree, and the reader can directly go to Section~\ref{Sec_3:existence} to find these operators. However, we have decided to include this subsection, where we use the arguments from Remark~\ref{Rem:2-conjugacy} for the multiplication and backward shift operators on $\Lc(T)$ and $\Lc_0(T)$ as defined in \cite{ACoEas2011,ACoEas2012,ACoEas2013,CoEas2010,CoEas2012,MarRi2020}. Following the spirit of Corollary~\ref{Cor:norm}, with our ``isomorphic approach'' we can compute the precise norm of the mentioned operators. In particular, we improve \cite[Theorem~4.1]{CoEas2010} via Corollary~\ref{Cor:norm_mult}, and then we solve all the questions left in \cite[Section~4]{MarRi2020} via Corollary~\ref{Cor:norm_shift} below.

\begin{remark}[\textbf{Multiplication operators}]\label{Rem:mult}
	As in \cite{CoEas2010}, given a rooted tree~$T$ and a complex-valued map $\psi:T\longrightarrow \CC$, if we again denote by $\Fc$ the set of all complex-valued functions $f:T\longrightarrow \CC$ then we can consider the linear map $M_{\psi}:\Fc\longrightarrow\Fc$ defined as $M_{\psi}(f):=\psi \cdot f$ for each $f \in \Fc$. Such a map~$M_{\psi}$ will be called the {\em multiplication operator} with symbol~$\psi$.
	
	By \cite[Theorem~3.6]{CoEas2010} we know that $M_{\psi}$ is well-defined and a bounded operator in $\Lc(T)$, if and only if $M_{\psi}$ is well-defined and bounded operator in $\Lc_0(T)$, if and only if
	\[
	\sup_{v \in T} |\psi(v)| < \infty \quad \text{ and } \quad \sup_{v \in T^*} \left( |v| \cdot |\psi(v)-\psi(v^-)| \right) < \infty.
	\]
	As in Remark~\ref{Rem:2-conjugacy}, if $M_{\psi}$ is a bounded operator in $\Lc(T)$, then one can find a natural $\ell^{\infty}(T)$-operator that is {\em topologically conjugate} to $M_{\psi}$ by letting
	\[
	\hat{M_{\psi}}(x) := (D\circ M_{\psi}\circ D^{-1})(x) \quad \text{ for each } x \in \ell^{\infty}(T),
	\]
	where the map $D$ is again the isomorphism established in Remark~\ref{Rem:1-isom}. We thus have that the diagram
	\begin{equation*}
		\begin{CD}
			\Lc(T) @>{M_{\psi}}>> \Lc(T) \\
			@V{D}VV  @VV{D}V \\
			\ell^{\infty}(T) @>{\hat{M_{\psi}}}>> \ell^{\infty}(T)
		\end{CD}
	\end{equation*}
	is commutative, so that $M_{\psi}$ and $\hat{M_{\psi}}$ are conjugate via the isometric isomorphism $D$. The same still holds if we replace $\Lc(T)$ and $\ell^{\infty}(T)$ with the respective spaces $\Lc_0(T)$ and $c_0(T)$.
\end{remark}

\begin{corollary}\label{Cor:norm_mult}
	Let $\psi:T\longrightarrow\CC$ be a complex-valued map on a rooted tree $T$. If the multiplication operator $M_{\psi}$ is bounded in the space $\Lc(T)$, or equivalently in $\Lc_0(T)$, then its operator norm is
	\[
	\|M_{\psi}\|_{\Lc} = \max\left\{ |\psi(\zero)| , \sup_{v \in T^*} \left( |\psi(v)| + |v| \cdot |\psi(v)-\psi(v^-)| \right) \right\}.
	\]
\end{corollary}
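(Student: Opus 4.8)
The plan is to follow the same strategy as in Corollary~\ref{Cor:norm}: since $M_{\psi}$ and $\hat{M_{\psi}}$ are conjugate through the isometric isomorphism $D$ of Remark~\ref{Rem:1-isom} (see Remark~\ref{Rem:mult}), the two operators share the same operator norm, so it suffices to compute $\|\hat{M_{\psi}}\|_{\infty}$ as an operator on $\ell^{\infty}(T)$ (equivalently on $c_0(T)$). First I would write down the explicit action of $\hat{M_{\psi}}=D\circ M_{\psi}\circ D^{-1}$ on a vector $x=(x_v)_{v\in T}$. Setting $f:=D^{-1}(x)$, so that $f(v)=\sum_{w\in[\zero,...,v]}x_w$ and hence $f(v)=f(v^-)+x_v$ for $v\in T^*$, one has $[M_{\psi}f](v)=\psi(v)f(v)$, and after applying $D$ and telescoping we get $[\hat{M_{\psi}}(x)]_{\zero}=\psi(\zero)x_{\zero}$, while for $v\in T^*$
\[
[\hat{M_{\psi}}(x)]_v = \psi(v)f(v)-\psi(v^-)f(v^-) = \psi(v)\,x_v + (\psi(v)-\psi(v^-))\sum_{w\in[\zero,...,v^-]}x_w .
\]
The key structural observation is that this is a linear functional in the $|v|+1$ coordinates $\{x_w : w\in[\zero,...,v]\}$, carrying coefficient $\psi(v)$ at $x_v$ and the \emph{same} coefficient $\psi(v)-\psi(v^-)$ at each of the $|v|$ proper ancestors of $v$.

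Next I would obtain the upper bound. For any $x$ with $\|x\|_{\infty}\leq 1$, the triangle inequality applied coordinatewise gives $|[\hat{M_{\psi}}(x)]_{\zero}|\leq|\psi(\zero)|$ and $|[\hat{M_{\psi}}(x)]_v|\leq|\psi(v)|+|v|\cdot|\psi(v)-\psi(v^-)|$ for every $v\in T^*$. Hence $\|\hat{M_{\psi}}(x)\|_{\infty}$ is bounded by the claimed maximum, and taking the supremum over the unit ball yields $\|\hat{M_{\psi}}\|_{\infty}\leq\max\{\ldots\}$.

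For the matching lower bound I would exhibit, for each fixed $v\in T^*$, a finitely supported test vector $x^{(v)}\in c_{00}(T)\subset c_0(T)$ of norm one whose $v$-coordinate realizes the bound exactly: choose each coordinate $x_w$ with $w\in[\zero,...,v]$ to be a unimodular complex number aligning the phase of $\psi(v)x_v$ (respectively of $(\psi(v)-\psi(v^-))x_w$ for each proper ancestor $w$) so that all $|v|+1$ terms add up in modulus, giving $[\hat{M_{\psi}}(x^{(v)})]_v=|\psi(v)|+|v|\cdot|\psi(v)-\psi(v^-)|$ (the degenerate case where this value vanishes being trivial). The vector $e^{(\zero)}$, with an appropriate phase, handles the root term $|\psi(\zero)|$. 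Since $\|\hat{M_{\psi}}\|_{\infty}\geq|[\hat{M_{\psi}}(x^{(v)})]_v|$ for every $v$, taking the supremum over $v\in T^*$ together with the root estimate produces the reverse inequality, and the two bounds combine to the stated equality; the conjugacy of Remark~\ref{Rem:mult} then transfers it back to $\|M_{\psi}\|_{\Lc}$.

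The only genuinely delicate points are bookkeeping ones: correctly telescoping $D\circ M_{\psi}\circ D^{-1}$ so as to isolate the factor $(\psi(v)-\psi(v^-))$ multiplying the \emph{full} partial sum over the proper ancestors of $v$, and observing that although the supremum over $v$ need not be attained, each individual $v$ already furnishes a legitimate lower bound for the operator norm, because the $\ell^{\infty}$-norm of the output dominates any single one of its coordinates. No attainment is therefore required, and the entire computation is valid verbatim on $c_0(T)$ since every test vector $x^{(v)}$ is finitely supported.
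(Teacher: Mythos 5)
Your proposal is correct and takes essentially the same route as the paper's own proof: you derive the identical conjugated formula for $\hat{M_{\psi}}$ (the paper's equation \eqref{eq:M}), obtain the upper bound by the same coordinatewise triangle inequality, and establish the lower bound with the same phase-aligned, finitely supported test vectors $x^{(v)}$, correctly noting that no attainment of the supremum is needed. The only cosmetic difference is that you handle the root term with a separate vector $e^{(\zero)}$, whereas the paper reads $|\psi(\zero)|$ off the same vectors $x^{(u)}$; this changes nothing of substance.
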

\begin{proof}
	As in Corollary~\ref{Cor:norm} we will directly compute the norm of $\|\hat{M_{\psi}}\|_{\infty}$, which coincides with $\|M_{\psi}\|_{\Lc}$ by the isometry condition of $D$. Firstly, note that given any $x=(x_v)_{v\in T} \in \ell^{\infty}(T)$ we have
	\[
	[(M_{\psi} \circ D^{-1})(x)](v) = \psi(v) \cdot \left( \sum_{w \in [\zero,v_1,v_2,...,v]} x_w \right) \quad \text{ for every } v \in T.
	\]
	Thus, since $\hat{M_{\psi}}(x) := (D \circ M_{\psi} \circ D^{-1})(x)$, we have that $[\hat{M_{\psi}}(x)]_{\zero} = \psi(\zero) \cdot x_{\zero}$, and that
	\begin{align}\label{eq:M}
		[\hat{M_{\psi}}(x)]_v &= \psi(v) \cdot \left( \sum_{w \in [\zero,v_1,v_2,...,v]} x_w \right) - \psi(v^-) \cdot \left( \sum_{w \in [\zero,v_1,v_2,...,v^-]} x_w \right) \nonumber \\[7.5pt]
		&= \psi(v) \cdot x_v + \left( \psi(v)-\psi(v^-) \right) \cdot \left( \sum_{w \in [\zero,v_1,v_2,...,v^-]} x_w \right),
	\end{align}
	for every $v \in T^*$. Taking modulus on the previous expressions and using the triangle inequality in a standard way one can easily show, when $\|x\|_{\infty} \leq 1$, that
	\[
	\left| [\hat{M_{\psi}}(x)]_v \right| \leq \max\left\{ |\psi(\zero)| , \sup_{v \in T^*} \left( |\psi(v)| + |v| \cdot |\psi(v)-\psi(v^-)| \right) \right\} \quad \text{ for all } v \in T.
	\]
	Thus,
	\[
	\|\hat{M_{\psi}}\|_{\infty} = \sup_{\|x\|_{\infty} \leq 1} \| \hat{M_{\psi}}(x) \|_{\infty} \leq \max\left\{ |\psi(\zero)| , \sup_{v \in T^*} \left( |\psi(v)| + |v| \cdot |\psi(v)-\psi(v^-)| \right) \right\}.
	\]
	Conversely, if for each $u \in T^*$ we have that $[\zero,u_1,u_2,...,u]$ is the unique path from $\zero$ to $u$, then we can construct a vector $x^{(u)} \in c_0(T) \subset \ell^{\infty}(T)$ as follows:
	\begin{enumerate}[--]
		\item let $x^{(u)}_{\zero}=x^{(u)}_{u_1}=x^{(u)}_{u_2}=...=x^{(u)}_{u^-}=\lambda \in \CC$ with $|\lambda|=1$ and $\lambda \cdot \left( \psi(u)-\psi(u^-) \right) = \left| \psi(u)-\psi(u^-) \right|$;
		
		\item pick $x^{(u)}_u \in \CC$ with $|x^{(u)}_u|=1$ and such that $\psi(u) \cdot x^{(u)}_u = |\psi(u)|$;
		
		\item and let $x^{(u)}_w = 0$ for all $w \in T \setminus \{\zero,u_1,u_2,...,u\}$.
	\end{enumerate}
	Then we have that $\|x^{(u)}\|_{\infty}=1$, but also that $\left| [\hat{M_{\psi}}(x^{(u)})]_{\zero} \right| = |\psi(\zero)|$, and by \eqref{eq:M} that
	\[
	[\hat{M_{\psi}}(x^{(u)})]_u = \psi(u) \cdot x^{(u)}_u + \left( \psi(u)-\psi(u^-) \right) \cdot |u| \cdot \lambda = |\psi(u)| + |u| \cdot |\psi(u)-\psi(u^-)|,
	\] 
	for each $u \in T^*$. Thus,
	\[
	\|\hat{M_{\psi}}\|_{\infty} \geq \sup_{u\in T^*} \|\hat{M_{\psi}}(x^{(u)})\|_{\infty} \geq \max\left\{ |\psi(\zero)| , \sup_{u \in T^*} \left( |\psi(u)| + |u| \cdot |\psi(u)-\psi(u^-)| \right) \right\},
	\]
	finishing the proof.
\end{proof}

In \cite{CoEas2010}, the equivalent norm $\|\cdot\|$ defined in \eqref{eq:norm_+} was used instead than $\|\cdot\|_{\Lc}$ defined in \eqref{eq:norm_L}. The reader should be careful since in \cite{CoEas2010} the symbol ``$\|\cdot\|_{\Lc}$'' is used to denote the norm $\|\cdot\|$. However, also when one uses $\|\cdot\|$, similar computations to those employed in Corollary~\ref{Cor:norm_mult} lead to
\[
\|M_{\psi}\| = |\psi(0)| + \sup_{v \in T^*} \left( |\psi(v)| + |v| \cdot |\psi(v)-\psi(v^-)| \right)
\]
for the norm of $M_{\psi}$ as an operator from $(\Lc(T),\|\cdot\|)$ to itself, or from $(\Lc_0(T),\|\cdot\|)$ to itself. This last equality is our real improvement of \cite[Theorem~4.1]{CoEas2010}. The ``isomorphic approach'' developed here may be used to compute the exact norm of the multiplication operators considered in \cite{ACoEas2011,ACoEas2012,ACoEas2013,CoEas2012}.

\begin{remark}[\textbf{The backward shift}]\label{Rem:shift}
	As in \cite{MarRi2020}, given a rooted tree~$T$, if we again denote by $\Fc$ the set of all functions $f:T\longrightarrow \CC$, we can consider the linear map $B:\Fc\longrightarrow\Fc$ defined for $f \in \Fc$ as
	\[
	[B(f)](v) := \sum_{w \in \Chi(v)} f(w) \quad \text{ for each } v \in T.
	\]
	This map is called the {\em backward shift} on $\Fc$. The reader must distinguish this map from the original backward shift mentioned at the Introduction of this paper (these operators act in different spaces).
	
	Following \cite{MarRi2020}, given a vertex $v \in T$ we will denote by $\gamma(v)$ the cardinality of the set $\Chi(v)$ of children of $v$. Moreover, a rooted tree~$T$ is said to be {\em homogeneous by sectors} ({\em at level $N$}) if there exists $N \in \NN_0$ such that for all $v \in T$ with $|v|=N$ we have that $\gamma(v)=\gamma(w)$ for every $w \in S_{v}$. In this case, by \cite[Theorem~4.10]{MarRi2020}, we know that {\em the following statements are equivalent:
	\begin{enumerate}[{\em(i)}]
		\item $B$ is well-defined and a bounded operator from $\Lc(T)$ to itself;
		
		\item $B$ is well-defined and a bounded operator from $\Lc_0(T)$ to itself;
		
		\item the tree $T$ is homogeneous by sectors;
		
		\item we have that $\sup\left\{ |v| \cdot |\gamma(v)-\gamma(v^-)| \ ; \ v\in T^* \right\} < \infty$.
	\end{enumerate}
	In addition, when these conditions hold one has that $\|B\|_{\Lc} \leq \max\left\{ 2\gamma(\zero) , \Lambda(T) \right\}$, where
	\[
	\Lambda(T) := \sup_{v \in T^*} \left\{ \gamma(v) + \gamma(v^-) - 1 + \left|\gamma(v)-1\right| + |v| \cdot |\gamma(v)-\gamma(v^-)| \right\}.
	\]}
	
	As in Remarks~\ref{Rem:2-conjugacy}~and~\ref{Rem:mult}, when $B$ is bounded in $\Lc(T)$, then one can find a natural $\ell^{\infty}(T)$-operator that is {\em topologically conjugate} to $B$ by letting
	\[
	\hat{B}(x) := (D\circ B\circ D^{-1})(x) \quad \text{ for each } x \in \ell^{\infty}(T).
	\]
	Here, the map $D$ is again the isomorphism established in Remark~\ref{Rem:1-isom}. We thus have, one more time, that the diagram
	\begin{equation*}
		\begin{CD}
			\Lc(T) @>{B}>> \Lc(T) \\
			@V{D}VV  @VV{D}V \\
			\ell^{\infty}(T) @>{\hat{B}}>> \ell^{\infty}(T)
		\end{CD}
	\end{equation*}
	is commutative, so that $B$ and $\hat{B}$ are conjugate via the isometric isomorphism $D$. The same still holds if we replace $\Lc(T)$ and $\ell^{\infty}(T)$ with the respective spaces $\Lc_0(T)$ and $c_0(T)$. In particular, given any vector $x=(x_v)_{v\in T} \in \ell^{\infty}(T)$ we have for each $v \in T$ that
	\[
	[(B \circ D^{-1})(x)](v) = \sum_{u \in \Chi(v)} \left( \sum_{w \in [\zero,v_1,v_2,...,v,u]} x_w \right) = \gamma(v) \cdot \left( \sum_{w \in [\zero,v_1,v_2,...,v]} x_w \right) + \sum_{u \in \Chi(v)} x_u.
	\]
	Thus, since $\hat{B}(x) := (D \circ B \circ D^{-1})(x)$, we have that
	\begin{equation}\label{eq:bs_zero}
		[\hat{B}(x)]_{\zero} = \gamma(\zero) \cdot x_{\zero} + \sum_{w \in \Chi(\zero)} x_w,
	\end{equation}
	and that
	\begin{align}\label{eq:bs_v}
		[\hat{B}(x)]_v &= \left( \gamma(v) \cdot \left( \sum_{w \in [\zero,v_1,...,v]} x_w \right) + \sum_{w \in \Chi(v)} x_w \right) - \left( \gamma(v^-) \cdot \left( \sum_{w \in [\zero,v_1,...,v^-]} x_w \right) + \sum_{w \in \Chi(v^-)} x_w \right) \nonumber \\[7.5pt]
		&=  \sum_{w \in \Chi(v)} x_w \ \ - \sum_{w \in \Chi(v^-)\setminus\{v\}} x_w + (\gamma(v)-1) x_v + \left( \gamma(v)-\gamma(v^-) \right) \cdot \left( \sum_{w \in [\zero,v_1,...,v^-]} x_w \right),
	\end{align}
	for each $v \in T^*$.
\end{remark}

\begin{corollary}\label{Cor:norm_shift}
	Let $T$ be a rooted tree. If the backward shift operator $B$ is bounded in the space $\Lc(T)$, or equivalently in $\Lc_0(T)$, then its operator norm in these spaces is $\|B\|_{\Lc} = \max\left\{ 2\gamma(\zero) , \Lambda(T) \right\}$.
\end{corollary}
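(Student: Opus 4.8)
The plan is to exploit the conjugacy of Remark~\ref{Rem:shift}: since $D$ is an isometric isomorphism, one has $\|B\|_{\Lc}=\|\hat{B}\|_{\infty}$, so it suffices to compute the $\ell^{\infty}(T)$-operator norm of $\hat{B}$ directly from the explicit formulas \eqref{eq:bs_zero} and \eqref{eq:bs_v}. The estimate from \cite[Theorem~4.10]{MarRi2020} already provides the inequality $\|B\|_{\Lc}\leq\max\{2\gamma(\zero),\Lambda(T)\}$, so the genuine task is the reverse inequality; nevertheless, I would re-derive both bounds cleanly from the conjugate operator, following the pattern of Corollary~\ref{Cor:norm_mult}.

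For the upper bound, the key observation is that in \eqref{eq:bs_v} the four groups of coordinates of $x$ that appear, namely $\{x_w:w\in\Chi(v)\}$, $\{x_w:w\in\Chi(v^-)\setminus\{v\}\}$, the single coordinate $x_v$, and $\{x_w:w\in[\zero,v_1,\dots,v^-]\}$, are supported on pairwise disjoint sets of vertices: the children of $v$ are descendants of $v$, the siblings of $v$ branch off at $v^-$ and are neither descendants nor ancestors of $v$, while the path vertices are exactly the strict ancestors of $v$. Hence, applying the triangle inequality to \eqref{eq:bs_v} with $\|x\|_{\infty}\leq 1$ and counting the cardinalities of these four disjoint blocks (respectively $\gamma(v)$, $\gamma(v^-)-1$, $1$, and $|v|$) yields
\[
\bigl|[\hat{B}(x)]_v\bigr|\leq \gamma(v)+\gamma(v^-)-1+|\gamma(v)-1|+|v|\cdot|\gamma(v)-\gamma(v^-)|\leq\Lambda(T),
\]
and likewise \eqref{eq:bs_zero} gives $|[\hat{B}(x)]_{\zero}|\leq 2\gamma(\zero)$. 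Taking the supremum over all $v$ then produces $\|\hat{B}\|_{\infty}\leq\max\{2\gamma(\zero),\Lambda(T)\}$.

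For the matching lower bound I would, for each fixed $v\in T^*$, build a finitely supported unit vector $x^{(v)}\in c_{00}(T)\subset c_0(T)$ by assigning unimodular values on the four disjoint blocks with phases aligned so that all four summands of \eqref{eq:bs_v} become nonnegative and add up: put $x^{(v)}_w=1$ for $w\in\Chi(v)$, put $x^{(v)}_w=-1$ for $w\in\Chi(v^-)\setminus\{v\}$, choose $x^{(v)}_v$ so that $(\gamma(v)-1)\,x^{(v)}_v=|\gamma(v)-1|$, and fix a single unimodular constant on the entire ancestor path $[\zero,v_1,\dots,v^-]$ so that multiplication by $\gamma(v)-\gamma(v^-)$ produces $|v|\cdot|\gamma(v)-\gamma(v^-)|$. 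Because the blocks are disjoint this assignment is consistent and $\|x^{(v)}\|_{\infty}=1$, and \eqref{eq:bs_v} then gives $[\hat{B}(x^{(v)})]_v$ equal exactly to the four-term expression above; separately, $x_{\zero}=1$ together with $x_w=1$ on $\Chi(\zero)$ realizes $2\gamma(\zero)$ in \eqref{eq:bs_zero}. Taking the supremum over $v\in T^*$ yields $\|\hat{B}\|_{\infty}\geq\max\{2\gamma(\zero),\Lambda(T)\}$, and since every test vector lies in $c_{00}(T)$ the identical computation establishes the formula in $\Lc_0(T)$ as well.

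The main obstacle is the disjointness claim for the four coordinate blocks in \eqref{eq:bs_v}: this is the combinatorial crux on which both the tightness of the triangle inequality (for the upper bound) and the consistency of the sign-alignment (for the lower bound) rest. Once it is verified from the ancestor/descendant/sibling structure of the rooted tree, the remainder is the now-routine phase-alignment argument already employed in Corollary~\ref{Cor:norm_mult}, combined with the isometry of the map $D$.
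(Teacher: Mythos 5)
Your proposal is correct and follows essentially the same route as the paper: transfer to $\hat{B}$ via the isometry $D$, take the upper bound $\|\hat{B}\|_{\infty}\leq\max\{2\gamma(\zero),\Lambda(T)\}$ from \cite[Theorem~4.10]{MarRi2020} (which you merely re-derive), and realize the lower bound with exactly the same sign-aligned, finitely supported unit vectors $x^{(\zero)}$ and $x^{(u)}$ evaluated through \eqref{eq:bs_zero} and \eqref{eq:bs_v}, the disjointness of the four coordinate blocks being the same combinatorial point the paper's construction implicitly uses (note it is needed only for the lower bound, since the triangle inequality requires no disjointness).
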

\begin{proof}
	As in Corollaries~\ref{Cor:norm}~and~\ref{Cor:norm_mult} we will directly compute the norm of $\|\hat{B}\|_{\infty}$, which coincides with that of $\|B\|_{\Lc}$ by the isometry condition of $D$. By \cite[Theorem~4.10]{MarRi2020}, we already know that
	\[
	\|\hat{B}\|_{\infty} \leq \max\left\{ 2\gamma(\zero) , \Lambda(T) \right\}.
	\]
	For the converse we start considering the vector $x^{(\zero)} \in c_0(T) \subset \ell^{\infty}(T)$ defined as $x^{(\zero)}_w = 1$ when $w \in \{\zero\} \cup \Chi(\zero)$ and $x^{(\zero)}_w=0$ otherwise. It follows that $\|x^{(\zero)}\|_{\infty}=1$, and from \eqref{eq:bs_zero} we have that
	\[
	[\hat{B}(x^{(\zero)})]_{\zero} = \gamma(\zero) + \sum_{w \in \Chi(\zero)} 1 = 2\gamma(\zero).
	\]
	Moreover, if for each $u \in T^*$ we have that $[\zero,u_1,u_2,...,u]$ is the unique path from $\zero$ to $u$ then we can construct a vector $x^{(u)} \in c_0(T) \subset \ell^{\infty}(T)$ as follows:
	\begin{enumerate}[--]
		\item let $x^{(u)}_{\zero}=x^{(u)}_{u_1}=x^{(u)}_{u_2}=...=x^{(u)}_{u^-}=
		\begin{cases}
			\phantom{-}1 & \text{ if } \gamma(u)-\gamma(u^-)\geq 0, \\[1pt]
			-1 & \text{ if } \gamma(u)-\gamma(u^-)<0;
		\end{cases}$\\[-7.5pt]
		
		\item let $x^{(u)}_{u}=
		\begin{cases}
			\phantom{-}1 & \text{ if } \gamma(u)-1\geq 0, \\[1pt]
			-1 & \text{ if } \gamma(u)-1<0;
		\end{cases}$\\[-5pt]
		
		\item let $x^{(u)}_w=-1$ for all $w \in \Chi(u^-)\setminus\{u\}$;
		
		\item let $x^{(u)}_w=1$ for all $w \in \Chi(u)$;
		
		\item and let $x^{(u)}_w = 0$ for all $w \in T \setminus \left( \{\zero,u_1,u_2,...,u\} \cup \Chi(u^-) \cup \Chi(u) \right)$.
	\end{enumerate}
	It follows that $\|x^{(u)}\|_{\infty}=1$ and, by \eqref{eq:bs_v}, that
	\begin{align*}
		[\hat{B}(x^{(u)})]_u &= \left( \sum_{w \in \Chi(u)} 1 \right) - \left( \sum_{w \in \Chi(u^-)\setminus\{u\}} -1 \right) + |\gamma(u)-1| +  |\gamma(u)-\gamma(u^-)| \cdot |u| \\[10pt]
		&= \gamma(u) + \gamma(u^-) - 1 + \left|\gamma(u)-1\right| +  |u| \cdot | \gamma(u)-\gamma(u^-) |,
	\end{align*}
	for each $u \in T^*$. Thus,
	\begin{align*}
		\|\hat{B}\|_{\infty} &\geq \sup_{u\in T} \|\hat{B}(x^{(u)})\|_{\infty} = \max\left\{ \|\hat{B}(x^{(\zero)})\|_{\infty} , \sup_{u\in T^*} \|\hat{B}(x^{(u)})\|_{\infty} \right\} \\[5pt]
		&\geq \max\left\{ 2\gamma(\zero) , \sup_{u \in T^*} \left\{ \gamma(u) + \gamma(u^-) - 1 + \left|\gamma(u)-1\right| + |u| \cdot |\gamma(u)-\gamma(u^-)| \right\} \right\} \\[7.5pt]
		&= \max\left\{ 2\gamma(\zero) , \Lambda(T) \right\}.\qedhere
	\end{align*}
\end{proof}

In \cite{MarRi2020} the underlying tree $T$ was allowed to have \textit{leaves} (i.e.\ vertices with no children). However, the proof of Corollary~\ref{Cor:norm_shift} remains valid for trees with leaves since the inequality ``$\gamma(u)-1<0$'' has been considered. Moreover, Corollary~\ref{Cor:norm_shift} resolves all the questions raised in \cite[Section~4]{MarRi2020} concerning the sharpness of the estimates for the norm $\|B\|_{\Lc}$ obtained there. For comparison, see \cite{ZhangZhao2022_MJM_norm}, where this result is proved by a considerably more involved argument than the one presented here.

\section[Frequently hypercyclic composition operators]{Frequently hypercyclic composition operators on $\Lc_0(T)$}\label{Sec_3:existence}

In this section we address Question~\ref{Q:first} for the particular case of composition operators whose symbols are strictly increasing self-maps of the set of non-negative integers (see Theorem~\ref{The:NN_0} below). Recall first that given a bounded linear operator $A:X\longrightarrow X$ on a separable Banach space $X$ we say that:
\begin{enumerate}[--]
	\item the operator $A$ is {\em hypercyclic} if there exists $x \in X$ for which the set $\{ A^n(x) \ ; \ n\in\NN \}$ is dense in $X$;
	
	\item the operator $A$ {\em satisfies the Frequent Hypercyclicity Criterion} if there exist a dense subset $X_0 \subset X$ and a (not necessarily linear nor continuous) map $B:X_0\longrightarrow X_0$ such that, for any $x \in X_0$,
	\begin{enumerate}[(I)]
		\item $\sum_{n=0}^{\infty} A^n(x)$ converges unconditionally;
		
		\item $\sum_{n=0}^{\infty} B^n(x)$ converges unconditionally;
		
		\item $AB(x)=x$, i.e.\ $B$ is a right-inverse for $A$.
	\end{enumerate}
\end{enumerate}

All the hypercyclic operators that we are going to consider in this paper do satisfy this version of the Frequent Hypercyclicity Criterion, which is one of the strongest properties (dynamically speaking) that a linear operator may present. In fact, let us recall that satisfying the Frequent Hypercyclicity Criterion implies being {\em frequently hypercyclic} (and hence hypercyclic), and that with the form presented here it also implies being {\em Devaney chaotic} and {\em topologically mixing} (see \cite[Proposition~9.11]{GrPe2011_book}), having the so-called {\em operator specification property} (see \cite{BarMarPe2016}), but also admitting {\em ergodic and invariant measures with full support} (see \cite{Agneessens2023} but also \cite{GriLo2023_JMPA,Lopez2024_RinM}), among many other strong dynamical properties.

We are not going to give the precise definitions of the dynamical notions mentioned above, and we refer the reader to the textbooks \cite{BaMa2009_book,GrPe2011_book} for more on Linear Dynamics. Actually, with the previous paragraph we only wanted to remark that the operators we are about to consider present a much more interesting dynamical behaviour than just hypercyclicity. Recall, though, that the main result of this section came originally motivated by Question~\ref{Q:first}, where only hypercyclicity was considered:

\begin{theorem}\label{The:NN_0}
	Let $\varphi:\NN_0\longrightarrow\NN_0$ be a strictly increasing map with $\varphi(0)>0$ and assume that the composition operator $C_{\varphi}:\Lc_0(T)\longrightarrow\Lc_0(T)$ is bounded, where $T$ is the rooted tree indexed by $\NN_0$, with $0 \in \NN_0$ as its root and with two vertex $k,l \in \NN_0$ being adjacent whenever $|k-l|=1$. Hence, the following statements are equivalent:
	\begin{enumerate}[{\em(i)}]
		\item $C_{\varphi}$ satisfies the Frequent Hypercyclicity Criterion;
		
		\item $C_{\varphi}$ is hypercyclic;
		
		\item $\displaystyle \limsup_{n\to\infty} \ \dist(\varphi^n(j),\varphi^n(j-1)) = \infty$ for every $j \in \NN$;
		
		\item $\displaystyle \lim_{n\to\infty} \ \dist(\varphi^n(j),\varphi^n(j-1)) = \infty$ for every $j \in \NN$;
		
		\item for each $j \in \NN$ there exists some $n_j \in \NN$ such that $\dist(\varphi^{n_j}(j),\varphi^{n_j}(j-1))\geq 2$.
	\end{enumerate}
\end{theorem}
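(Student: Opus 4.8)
The plan is to transport everything to the sequence space $c_0(\NN_0)$ via the conjugacy of Remark~\ref{Rem:2-conjugacy}: since $D$ is an isometric isomorphism and $C_{\varphi}$ is bounded, $C_{\varphi}$ and $\hat{C_{\varphi}}=D\circ C_{\varphi}\circ D^{-1}$ share all conjugacy-invariant dynamical properties, and $\hat{C_{\varphi}}$ satisfies the Frequent Hypercyclicity Criterion iff $C_{\varphi}$ does. Exactly as in Example~\ref{Exa:NN_0}, computing $D^{-1}$, $C_{\varphi}^n$ and $D$ in turn gives the block formula $[\hat{C_{\varphi}}^n(x)]_j=\sum_{k=\varphi^n(j-1)+1}^{\varphi^n(j)} x_k$ for $j\geq 1$. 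Writing $d_n(j):=\dist(\varphi^n(j),\varphi^n(j-1))=\varphi^n(j)-\varphi^n(j-1)$ for the gaps, the $j$-th coordinate of $\hat{C_{\varphi}}^n(x)$ is the sum of a block of length $d_n(j)$. The first observation is monotonicity: since $\varphi$ is strictly increasing, $\varphi(t)-\varphi(t-1)\geq 1$ for all $t$, so $d_{n+1}(j)=\varphi(\varphi^n(j))-\varphi(\varphi^n(j-1))\geq \varphi^n(j)-\varphi^n(j-1)=d_n(j)$; thus $(d_n(j))_n$ is nondecreasing with $d_0(j)=1$. This immediately yields (iii)$\Leftrightarrow$(iv) and the trivial (iv)$\Rightarrow$(v). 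I would then close the logical loop by proving (i)$\Rightarrow$(ii) (trivial: the Criterion implies frequent hypercyclicity, hence hypercyclicity), (ii)$\Rightarrow$(iv), (iv)$\Rightarrow$(i), and (v)$\Rightarrow$(iv).

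The combinatorial heart is the following ``eternal gap one'' dichotomy, which powers both (v)$\Rightarrow$(iv) and the necessity (ii)$\Rightarrow$(iv): \emph{if (iv) fails for some $j$, then there is an index $j'\in\NN$ with $d_n(j')=1$ for all $n$}. Indeed, by monotonicity a failing $j$ has $d_n(j)\equiv L$ for all large $n\geq N$. If $L=1$ then $j'=j$ works. If $L\geq 2$, I would use the splitting $d_{n+1}(j)=\sum_{i=1}^{d_n(j)} d_1(\varphi^n(j-1)+i)$: equality $d_{n+1}(j)=d_n(j)=L$ with each summand $\geq 1$ forces $d_1(\ell)=1$ on the whole block, i.e.\ $\varphi$ is a translation there, so $\varphi^n(\varphi^N(j-1)+1)=\varphi^{N+n}(j-1)+1$ for all $n$; hence $j':=\varphi^N(j-1)+1\in\NN$ satisfies $d_n(j')=1$ for every $n$. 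Using $\varphi(0)>0$ and strict monotonicity one checks $\varphi^n(j')\to\infty$. Taking the contrapositive gives (v)$\Rightarrow$(iv) directly. For necessity, if such $j'$ exists then $[\hat{C_{\varphi}}^n(x)]_{j'}=x_{\varphi^n(j')}\to 0$ for every $x\in c_0(\NN_0)$, so the $j'$-th coordinate of every orbit tends to $0$ and no orbit can be dense; thus (ii)$\Rightarrow$(iv).

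The substantial implication is the sufficiency (iv)$\Rightarrow$(i), which I would verify for $\hat{C_{\varphi}}$ on $c_0(\NN_0)$ with dense subset $X_0=\Sigma_{00}(\NN_0)$. Condition~(I) is free: since $\hat{C_{\varphi}}^n(\dx^{(w)})=D(\chi_{(\varphi^n)^{-1}(\{w\})})$ equals $\dx^{(m)}$ when $w=\varphi^n(m)$ and $0$ otherwise, and $\min\varphi^n(\NN_0)=\varphi^n(0)\to\infty$, each forward orbit $\sum_n\hat{C_{\varphi}}^n(\dx^{(w)})$ is a \emph{finite} sum. For~(III) I would take the right inverse $S$ that spreads mass uniformly over blocks, $S(e^{(j)})=\tfrac{1}{d_1(j)}\sum_{k\in B_j^{(1)}}e^{(k)}$ where $B_j^{(1)}=\{\varphi(j-1)+1,\dots,\varphi(j)\}$ (and the analogous root block); a direct check gives $\hat{C_{\varphi}}\circ S=\mathrm{id}$ and $S(\Sigma_{00})\subseteq\Sigma_{00}$ since the coordinates of $S(\dx^{(m)})$ sum to zero.

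The main obstacle is Condition~(II), the unconditional convergence of $\sum_n S^n(\dx^{(m)})$, and this is exactly where (iv) enters. Two facts are needed. First, the supports $B_m^{(n)}=\{\varphi^n(m-1)+1,\dots,\varphi^n(m)\}$ are \emph{pairwise disjoint in $n$} for each fixed $m$: the block index $a_n$ of a coordinate $k$ (the $j$ with $k\in B_j^{(n)}$) satisfies $a_{n+1}\leq a_n$, and whenever $a_n=m$ one has $m\notin B_m^{(1)}$ (because $\varphi(m-1)\geq m$), forcing $a_{n+1}<m$; so $k$ lies in $B_m^{(n)}$ for at most one $n$. Second, $\|S^n(e^{(m)})\|_{\infty}\to 0$: the value of $S^n(e^{(m)})$ at a leaf equals a product $\prod_{i=0}^{n-1}\tfrac{1}{d_1(c_i)}$ along a chain $m=c_0,c_1,\dots$ with $c_{i+1}\in B_{c_i}^{(1)}$, so it suffices that the minimal chain product tends to $\infty$; if it did not, König's lemma would produce an infinite chain with $d_1(c_i)=1$ eventually, and the translation argument of the dichotomy would again yield an eternal-gap-one index, contradicting (iv). Combining disjointness of the $B_m^{(n)}$ and $B_{m+1}^{(n)}$ with $\sup_{n\geq N}\|S^n(e^{(m)})\|_{\infty}\to 0$, each coordinate of $\sum_{n\in F}S^n(\dx^{(m)})$ receives at most two nonzero contributions bounded by this supremum, so the Cauchy criterion for unconditional convergence holds uniformly; extending by linearity over $\Sigma_{00}$ completes~(II), and hence the Criterion. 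I expect the norm-decay estimate $\|S^n(e^{(m)})\|_{\infty}\to 0$, tied back through König's lemma to the absence of eternal-gap-one indices, to be the delicate point requiring the most care.
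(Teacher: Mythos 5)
Your proposal is correct, and it shares the paper's overall architecture: both transport the problem to $\hat{C_{\varphi}}$ on $c_0(\NN_0)$ via the conjugacy of Remark~\ref{Rem:2-conjugacy}, take $X_0=\Sigma_{00}(\NN_0)$, and use the very same block-averaging right inverse (your $S$ is exactly the paper's map $B$ of \eqref{eq:B}). Where you genuinely diverge is in the two combinatorial ingredients. Writing $d_n(j):=\dist(\varphi^n(j),\varphi^n(j-1))$, your ``eternal gap one'' dichotomy (if $d_n(j)$ stabilizes at $L$, the splitting $d_{n+1}(j)=\sum_{i=1}^{d_n(j)}d_1(\varphi^n(j-1)+i)$ forces $d_1\equiv 1$ on the limit block, whence an index $j'$ with $d_n(j')=1$ for all $n$) simultaneously yields (v)$\Rightarrow$(iv) and, since then $[\hat{C_{\varphi}}^n(x)]_{j'}=x_{\varphi^n(j')}\to 0$ for every $x\in c_0(\NN_0)$, the necessity (ii)$\Rightarrow$(iv); the paper instead proves (v)$\Rightarrow$(iii) by a recursive accumulation of distances (reaching $\geq 2+k$ at the composed times $\sum_{l=0}^{k} n_{j_l}$) and obtains (ii)$\Rightarrow$(iii) by citing \cite[Proposition~5.4]{ColonnaMar2025_MJM_composition}, so your route is self-contained, legitimately exploiting the strict monotonicity hypothesis. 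For condition (II), the paper constructs explicit times $(n_j)$ along which, by (iv) applied uniformly to the finite current support block, every chain product at least doubles, giving the quantitative estimate $\|B^n(e^{(l)})\|_{\infty}\leq 2^{-j}$ of \eqref{eq:norm->0}; you replace this with a soft compactness argument: if the minimal chain product stayed bounded, prefixes of low-product chains are again low-product (all factors are integers $\geq 1$), the chain tree is finitely branching, and K\"onig's lemma produces an infinite chain with $d_1\equiv 1$ eventually, which your translation argument converts into an eternal-gap-one index contradicting (iv). I checked the delicate points and they all hold: the unique-chain product formula for the coordinates of $S^n(e^{(m)})$, the strict drift $\varphi^{n+1}(m-1)=\varphi^n(\varphi(m-1))\geq\varphi^n(m)$ making the supports disjoint across $n$, and your Cauchy-criterion finish with at most two nonzero contributions per coordinate (the paper instead concludes unconditionality from disjoint supports plus the unconditional basis of $c_0(\NN_0)$ — both fine). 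In exchange for losing the paper's explicit decay rate you gain a shorter, citation-free argument; your treatment of condition (I) via $\hat{C_{\varphi}}^n(\dx^{(w)})\in\{\dx^{(m)},0\}$ together with $\varphi^n(0)\to\infty$ is likewise a slicker equivalent of the paper's telescoping computation, and even clarifies that $\varphi^n(0)\to\infty$ needs only $\varphi(0)>0$ and strict monotonicity (indeed $\varphi^n(0)\geq n$), not assumption (iv) as the paper's phrasing suggests.
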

Before starting the proof recall that with the notation from Section~\ref{Sec_2:preliminar}, but for the tree considered in the statement of Theorem~\ref{The:NN_0}, we have the equality $j^-=j-1$ for each $j \in \NN$.
\begin{proof}[Proof of Theorem~\ref{The:NN_0}]
	The implication (i) $\Rightarrow$ (ii) is well-known (see \cite[Chapter~9]{GrPe2011_book}). For (ii) $\Rightarrow$ (iii) note that, if $C_{\varphi}$ is hypercyclic, then it follows from \cite[Proposition~5.4]{ColonnaMar2025_MJM_composition} that for each $j \in \NN$ the set of distances $\{ \dist(\varphi^n(j),\varphi^n(j^-)) \ ; \ n \in \NN_0 \}$ is unbounded. This fact trivially implies
	\[
	\limsup_{n\to\infty} \ \dist(\varphi^n(j),\varphi^n(j^-)) = \infty \quad \text{ for every } j \in \NN. 
	\]
	Let us prove (iii) $\Leftrightarrow$ (iv) $\Leftrightarrow$ (v) using that $\varphi$ is strictly increasing. Actually, the increasing condition implies that $\dist(\varphi^n(j),\varphi^n(j^-)) \leq \dist(\varphi^{n+1}(j),\varphi^{n+1}(j^-))$ for every pair $n,j \in \NN$, so that
	\[
	\limsup_{n\to\infty} \ \dist(\varphi^n(j),\varphi^n(j^-)) = \lim_{n\to\infty} \ \dist(\varphi^n(j),\varphi^n(j^-)) \quad \text{ for every } j \in \NN,
	\]
	and (iv) holds as soon as (iii) does. The implication (iv) $\Rightarrow$ (v) is trivial. Finally, if (v) holds we can briefly justify that (iii) also holds as follows: fix any arbitrary index $j \in \NN$, name it as $j_0:=j$ and note that by (v) there is $n_{j_0} \in \NN$ such that $\dist(\varphi^{n_{j_0}}(j_0),\varphi^{n_{j_0}}(j_0^-)) \geq 2$; letting $j_1:=\varphi^{n_{j_0}}(j_0)$, noticing that $\varphi^{n_{j_0}}(j_0-1) < j_1^-$, and since $\dist(\varphi^{n_{j_1}}(j_1),\varphi^{n_{j_1}}(j_1^-)) \geq 2$ for some $n_{j_1} \in \NN$ by (v), then
	\begin{align*}
		\dist\left(\varphi^{n_{j_1}+n_{j_0}}(j_0),\varphi^{n_{j_1}+n_{j_0}}(j_0^-)\right) &= \dist\left(\varphi^{n_{j_1}}\left(\varphi^{n_{j_0}}(j_0)\right),\varphi^{n_{j_1}}\left(\varphi^{n_{j_0}}(j_0^-)\right)\right) \\[5pt]
		&= \dist\left(\varphi^{n_{j_1}}(j_1),\varphi^{n_{j_1}}(j_1^-)\right) + \dist\left(\varphi^{n_{j_1}}(j_1^-),\varphi^{n_{j_1}}\left(\varphi^{n_{j_0}}(j_0^-)\right)\right) \\[5pt]
		&\geq 2 + 1 = 3.
	\end{align*}
	A trivial recursive argument, in which one lets $j_{k+1}:=\varphi^{\sum_{l=0}^{k} n_{j_l}}(j_0)$ for each $k \in \NN$, shows the existence of a sequence $(j_k)_{k\in\NN_0} \in \NN^{\NN_0}$ and by (v) a corresponding sequence $(n_{j_k})_{k\in\NN_0} \in \NN^{\NN_0}$ fulfilling that
	\begin{align*}
		\dist\left(\varphi^{\sum_{l=0}^k n_{j_l}}(j_0),\varphi^{\sum_{l=0}^k n_{j_l}}(j_0^-)\right) &= \dist\left(\varphi^{n_{j_k}}\left(\varphi^{\sum_{l=0}^{k-1} n_{j_l}}(j_0)\right),\varphi^{n_{j_k}}\left(\varphi^{\sum_{l=0}^{k-1} n_{j_l}}(j_0^-)\right)\right) \\[5pt]
		&= \dist\left(\varphi^{n_{j_k}}(j_k),\varphi^{n_{j_k}}(j_k^-)\right) + \dist\left(\varphi^{n_{j_k}}(j_k^-),\varphi^{n_{j_k}}\left(\varphi^{\sum_{l=0}^{k-1} n_{j_l}}(j_0^-)\right)\right) \\[5pt]
		&\geq 2 + k  \quad \text{ for each } k \in \NN_0.
	\end{align*}
	Thus, $\limsup_{n\to\infty} \ \dist(\varphi^n(j_0),\varphi^n(j_0^-)) = \infty$. The arbitrariness of $j_0=j \in \NN$ implies (iii).
	
	To finish the proof of Theorem~\ref{The:NN_0} it is then enough to show (iv) $\Rightarrow$ (i). Assume~(iv) and let us show that then the respective operator $\hat{C_{\varphi}}:c_0(\NN_0)\longrightarrow c_0(\NN_0)$, obtained from Remarks~\ref{Rem:1-isom} and \ref{Rem:2-conjugacy} applied to the tree $T=\NN_0$ and the map $\varphi$ described in the statement of Theorem~\ref{The:NN_0}, satisfies the previously introduced Frequent Hypercyclicity Criterion. We start noticing that given any $x=(x_j)_{j\in\NN_0} \in c_0(\NN_0)$, using the map $D$ from Remark~\ref{Rem:1-isom} as in Example~\ref{Exa:NN_0} but for this general case, we have that
	\[
	[D^{-1}(x)](j) = \sum_{k=0}^{j} x_k \quad \text{ and then } \quad [(C_{\varphi} \circ D^{-1})(x)](j) = \sum_{k=0}^{\varphi(j)} x_k \quad \text{ for each } j \in \NN_0.
	\]
	Hence, from the conjugacy established in Remark~\ref{Rem:2-conjugacy} we have that
	\begin{equation}\label{eq:C}
		\hat{C_{\varphi}}(x) := (D \circ C_{\varphi} \circ D^{-1})(x) = \left( \sum_{k=\varphi(j^-)+1}^{\varphi(j)} x_k \right)_{j\in\NN_0},
	\end{equation}
	where by abuse of notation we are setting $\varphi(0^-):=-1$. Moreover, a similar calculation also shows that for any $x=(x_j)_{j\in\NN_0} \in c_0(\NN_0)$ we have
	\begin{align}\label{eq:C^n}
		\hat{C_{\varphi}}^n(x) &:= (D \circ C_{\varphi} \circ D^{-1})^n(x) = (D \circ C_{\varphi}^n \circ D^{-1})(x) \nonumber \\[5pt]
		&= (D \circ C_{\varphi^n} \circ D^{-1})(x) = \left( \sum_{k=\varphi^n(j^-)+1}^{\varphi^n(j)} x_k \right)_{j\in\NN_0} \quad \text{ for each } n \in \NN.
	\end{align}
	Again, by abuse of notation we are setting $\varphi^n(0^-):=-1$ for every $n \in \NN$. Now, consider the dense linear subspaces $\Sigma_{00}(\NN_0) \subset c_{00}(\NN_0) \subset c_0(\NN_0)$ as defined after Remark~\ref{Rem:1-isom}, and note that for this particular case we have the equality
	\[
	\Sigma_{00}(\NN_0) = \left\{ x=(x_j)_{j\in\NN_0} \in c_{00}(\NN_0) \ ; \ \sum_{k=0}^{\infty} x_k = \sum_{k=0}^{|\supp(x)|} x_k = 0 \right\}.
	\]
	Let $B : c_{00}(\NN_0)\longrightarrow c_{00}(\NN_0)$ be the (linear) map acting on each $x=(x_j)_{j\in\NN_0} \in c_{00}(\NN_0)$ as
	\begin{equation}\label{eq:B}
		[B(x)]_k =
		\begin{cases}
			0 & \text{ if } k=0, \\[5pt]
			\tfrac{x_0}{\left|\varphi(0)\right|} & \text{ if } 1\leq k\leq \varphi(0), \\[5pt]
			\tfrac{x_j}{\dist(\varphi(j),\varphi(j^-))} & \text{ if } \varphi(j^-)+1\leq k\leq \varphi(j) \text{ with } j\in\NN.
		\end{cases}
	\end{equation}
	Note that the restriction $B : \Sigma_{00}(\NN_0)\longrightarrow \Sigma_{00}(\NN_0)$, which will still be denoted by $B$, is also well defined. We will check that $\hat{C_{\varphi}}$ satisfies the Frequent Hypercyclicity Criterion with respect to $B$:
	\begin{enumerate}[(I)]
		\item Consider any $x = (x_j)_{j\in\NN_0} \in \Sigma_{00}(\NN_0)$. Since $\varphi(0)>0$ and we are assuming (iv), it follows that $\lim_{n\to\infty} |\varphi^n(0)| = \infty$ so that we can choose $N_0 \in \NN$ fulfilling that $\varphi^{n}(0)\geq |\supp(x)|$ for all $n\geq N_0$. Thus, for each $n\geq N_0$ and by \eqref{eq:C^n} we have that
		\[
		\hat{C_{\varphi}}^n(x) = \left( \sum_{k=0}^{\varphi^{n}(0)} x_k, \sum_{k=\varphi^{n}(0)+1}^{\varphi^{n}(1)} x_k, \sum_{k=\varphi^{n}(1)+1}^{\varphi^{n}(2)} x_k, ... \right) = \left( \sum_{k=0}^{|\supp(x)|} x_k, 0, 0, ... \right) = (0,0,0,...),
		\]
		because $x \in \Sigma_{00}(\NN_0)$. It trivially follows that $\sum_{n=0}^{\infty} \hat{C_{\varphi}}^n(x)$ converges unconditionally.
		
		\item Now we claim that $\sum_{n=0}^{\infty} B^n(x)$ converges unconditionally for every $x \in \Sigma_{00}(\NN_0)$. To check it recall the notation after Remark~\ref{Rem:1-isom} from which $e^{(l)}=(\delta_{l,j})_{j\in\NN_0} \in c_0(\NN_0)$ for each $l \in \NN_0$. With this notation, since $B$ is a linear map originally defined on $c_{00}(\NN_0)$, since $\Sigma_{00}(\NN_0) \subset c_{00}(\NN_0)$, and since $B(e^{(0)})$ can be expressed as a linear combination of the vectors $e^{(1)}, e^{(2)}, ..., e^{(\varphi(0))}$, then the proof of (II) will be complete as soon as we check that:
		\begin{enumerate}[--]
			\item \textit{the series $\sum_{n=0}^{\infty} B^n(e^{(l)})$ converges unconditionally for every $l \in \NN$.}
		\end{enumerate}
		Thus, let us fix some $l \in \NN$. Given any $n \in \NN$, using \eqref{eq:B} inductively it follows that
		\begin{equation}\label{eq:supp}
			\supp\left(B^n(e^{(l)})\right) = \{ k \in \NN \ ; \ \varphi^n(l^-)+1\leq k\leq \varphi^n(l) \}.
		\end{equation}
		On the other hand, using assumption (iv) it is not hard to recursively construct a sequence of non-negative integers $(n_j)_{j\in\NN_0}$ with $n_0=0$ and such that
		\begin{equation}\label{eq:dist}
			\dist\left(\varphi^{n_j}(k),\varphi^{n_j}(k^-)\right) \geq 2 \quad \text{ for all } j \in \NN \text{ and } \varphi^{n_{j-1}}(l^-)+1\leq k\leq \varphi^{n_{j-1}}(l),
		\end{equation}
		where we are taking $\varphi^{n_0}=\varphi^{0}$ as the identity map in $\NN_0$, i.e.\ $\varphi^{n_0}(l^-)+1=l=\varphi^{n_0}(l)$. Hence, using \eqref{eq:B}, \eqref{eq:supp} and \eqref{eq:dist} one can easily prove by induction that
		\begin{equation}\label{eq:norm->0}
			\left\| B^n(e^{(l)}) \right\|_{\infty} = \max_{\varphi^n(l^-)+1\leq k\leq \varphi^n(l)} \left| [B^n(e^{(l)})]_k \right| \leq \frac{1}{2^j} \quad \text{ whenever } \sum_{i=0}^{j} n_i\leq n< \sum_{i=0}^{j+1} n_i.
		\end{equation}
		By \eqref{eq:supp} and the increasing condition of $\varphi$ we have that the supports of $B^n(e^{(l)})$ and $B^m(e^{(l)})$ do not intersect for $n\neq m$. Thus, since by \eqref{eq:norm->0} we have that $\|B^n(e^{(l)})\|_{\infty} \to 0$ as $n\to\infty$, it follows that the series $\sum_{n=0}^{\infty} B^n(e^{(l)})$ converges to some element in $c_0(\NN_0)$, and this convergence is unconditional because $(e^{(k)})_{k\in\NN_0}$ is an unconditional Schauder basis for $c_0(\NN_0)$.
		
		\item Combining the expressions \eqref{eq:C} and \eqref{eq:B} we have that $\hat{C_{\varphi}}B(x)=x$ for every $x \in \Sigma_{00}(\NN_0)$. 
	\end{enumerate}
	Since $C_{\varphi}$ and $\hat{C_{\varphi}}$ are topologically conjugate via an isometric isomorphism, it follows that $C_{\varphi}$ satisfies the Frequent Hypercyclicity Criterion and the proof of Theorem~\ref{The:NN_0} is now complete.
\end{proof}

\begin{remark}\label{Rem:NN_0}
	The statement and proof of Theorem~\ref{The:NN_0} admit several remarks:
	\begin{enumerate}[(a)]
		\item Note that Question~\ref{Q:first} is now partially solved, for the symbols that are strictly increasing self-maps of the set of non-negative integers, via Theorem~\ref{The:NN_0}.
		
		\item The condition ``\textit{$\varphi(0)>0$}'' in Theorem~\ref{The:NN_0} is necessary since, by \cite[Proposition~5.1]{ColonnaMar2025_MJM_composition}, the map~$\varphi$ can not have fixed points when $C_{\varphi}$ is hypercyclic. However, it seems very likely that an operator~$C_{\varphi}$ could be hypercyclic on $\Lc_0(\NN_0)$ even if the other condition ``\textit{$\varphi:\NN_0\longrightarrow\NN_0$ is strictly increasing}'' is not satisfied. Relaxing this last hypothesis could be a future matter of study.
		
		\item Under the assumptions ``\textit{$\varphi:\NN_0\longrightarrow\NN_0$ is strictly increasing}'' and ``\textit{$\varphi(0)>0$}'', then $C_{\varphi}$ is bounded in $\Lc_0(\NN_0)$ if and only if $\varphi$ is a Lipschitz map. Indeed, under these assumptions we have that
		\[
		\lim_{|j|\to\infty} |\varphi(j)| = \infty,
		\]
		and hence $C_{\varphi}$ is bounded in $\Lc_0(\NN_0)$ if and only if $\varphi$ is Lipschitz by \cite[Theorem~3.6]{ColonnaMar2025_MJM_composition}. 
		
		\item Using \eqref{eq:C} we can express the operator $\hat{C_{\varphi}}$ as an infinite-matrix acting on the coordinates of the sequence space $c_0(\NN_0)$. In fact, we can consider the infinite matrix $A_{\varphi} = (a_{i,j})_{i,j\in\NN_0}$ with
		\[
		A_{\varphi} =
		\begin{matrix}
			\text{\tiny $i=0$} \\ \text{\tiny $i=1$} \\ \text{\tiny $i=2$} \\[2.5pt] \vdots \\[2.5pt] \text{\tiny $i$} \\ \vdots
		\end{matrix}
		\begin{pmatrix}
			\overset{\text{\tiny $j=0$}}{1} & \overset{\text{\tiny $j=1$}}{1} & \cdots & \overset{\text{\tiny $\varphi(0)$}}{1} & \overset{\text{\tiny $\varphi(0)+1$}}{0} & \cdots & \overset{\text{\tiny $\varphi(1)$}}{0} & \overset{\text{\tiny $\varphi(1)+1$}}{0} & .... & \overset{\text{\tiny $\varphi(i^-)$}}{0} & \overset{\text{\tiny $\varphi(i^-)+1$}}{0} & \cdots \\
			0 & 0 & \cdots & 0 & 1 & \cdots & 1 & 0 & .... & 0 & 0 & \cdots \\
			0 & 0 & \cdots & 0 & 0 & \cdots & 0 & 1 & .... & 0 & 0 & \cdots \\
			\vdots & \vdots & \cdots & \vdots & \vdots & \cdots & \vdots & \vdots & .... & \vdots & \vdots & \cdots\\
			0 & 0 & \cdots & 0 & 0 & \cdots & \vdots & \vdots & .... & 0 & 1 & \cdots \\
			\vdots & \vdots & \cdots & \vdots & \vdots & \cdots & \vdots & \vdots & .... & \vdots & \vdots & \cdots
		\end{pmatrix}
		\]
		As far as we know this class of operators has not been considered or named in Linear Dynamics, and the author proposes to call them ``{\em accumulative backward shifts}''.
		
		\item The expression \eqref{eq:C} also induces bounded operators, which we still denote by $\hat{C_{\varphi}}$, on the classical spaces $\ell^p(\NN_0)$ for $1\leq p< \infty$. However, the respective operator $\hat{C_{\varphi}} : \ell^1(\NN_0) \longrightarrow \ell^1(\NN_0)$ cannot be hypercyclic since it is easily checked that $\|\hat{C_{\varphi}}\|_1 = 1$. For the other $\ell^p(\NN_0)$-spaces with $1<p<\infty$, the existence of $p$-summable sequences that are not $1$-summable may be used to prove that some of the respective operators $\hat{C_{\varphi}} : \ell^p(\NN_0) \longrightarrow \ell^p(\NN_0)$ could be hypercyclic. Characterizing the dynamics of the ``{\em $\ell^p(\NN_0)$-accumulative backward shifts}'' could be a future matter of study.
		
		\item Similar results to Theorem~\ref{The:NN_0} may be obtained for composition operators acting on rooted trees other than $T=\NN_0$ as described above, which could also be a future matter of study. However, the main objective of this paper is to provide some examples of frequently hypercyclic composition operators, and to expose the isometric isomorphism exhibited in Remarks~\ref{Rem:1-isom} as a tool to simplify the study of $\Lc(T)$, $\Lc_0(T)$ and of some operators acting there. Hence, for simplicity, here we will only consider rooted trees indexed by $\NN_0$ with $0$ as their root (see also Example~\ref{Exa:first} below).
	\end{enumerate}
\end{remark}

\begin{example}\label{Exa:first}
	Consider the composition operator described in \cite[Examples~4.6~and~6.7]{ColonnaMar2025_MJM_composition}: we let $T$ be the rooted tree indexed by $\NN_0$, again with $0 \in \NN_0$ as its root and with two vertex $k,l \in \NN_0$ being adjacent whenever $|k-l|=1$, and we let $\varphi:\NN_0\longrightarrow\NN_0$ with $\varphi(j)=2j+1$ for each $j\in\NN_0$. Hence:
	\begin{enumerate}[(a)]
		\item It was argued in \cite[Example~4.6]{ColonnaMar2025_MJM_composition} that the respective operator $C_{\varphi}:\Lc(\NN_0)\longrightarrow\Lc(\NN_0)$ is bounded with $\|C_{\varphi}\|_{\Lc}=2$, that its spectral radius is $r(C_{\varphi})=2$, and that its spectrum is $\sigma(C_{\varphi})=2\cl{\DD}$ because its point spectrum is $\sigma_p(C_{\varphi})=2\DD$, where we are letting $\DD:=\{ z \in \CC \ ; \ |z|<1 \}$. However, we must highlight that in \cite[Example~4.6]{ColonnaMar2025_MJM_composition}, the $C_{\varphi}$-eigenvector $f^{(\lambda)}:T\longrightarrow\CC$ considered for each $C_{\varphi}$-eigenvalue $\lambda \in 2\DD\setminus\{0\}$ was defined as $f^{(\lambda)}(j):=(j+1)^{\mu}$ for each $j \in \NN_0$ and some fixed $\mu \in \CC$ with $2^{\mu}=\lambda$. The reader is now invited to look at the (at least complex) calculations needed and done in \cite[Example~4.6]{ColonnaMar2025_MJM_composition} to ensure that the function $f^{(\lambda)}$ belongs to $\Lc_0(\NN_0)$.
		
		\item It was argued in \cite[Example~6.7]{ColonnaMar2025_MJM_composition} that the respective operator $C_{\varphi}:\Lc(\NN_0)\longrightarrow\Lc(\NN_0)$ is mixing, since it satisfies a nice criterion proved for general trees (see \cite[Theorem~6.6]{ColonnaMar2025_MJM_composition}). However, this result was not enough to imply further dynamical properties such as that of frequent hypercyclicity or that of Devaney chaos (i.e.\ hypercyclicity plus a dense set of periodic points).
	\end{enumerate}
	With the theory developed in this paper we can extremely simplify the (in our opinion technical) proof of the equality $\sigma_p(C_{\varphi})=2\DD$ stated in part (a) of this example, but also to strongly improve the result stated in part (b) via the Frequent Hypercyclicity Criterion. Indeed, let us briefly justify all the facts stated above via the results and remarks proved in this paper, based on the isometric isomorphism established in Remark~\ref{Rem:1-isom} and the topological conjugacy established in Remark~\ref{Rem:2-conjugacy}:
	\begin{enumerate}[(a')]
		\item Note that~$\varphi$ is strictly increasing, that $\varphi(0)=1>0$, and that $\varphi$ is a Lipschitz map with constant~$L_{\varphi}=2$. Hence, by Remark~\ref{Rem:NN_0} we know that $C_{\varphi}$ is bounded in $\Lc_0(\NN_0)$ and $\varphi$ satisfies the initial assumptions of Theorem~\ref{The:NN_0}. In particular, by Corollary~\ref{Cor:norm} we know that $\|C_{\varphi}\|=2$ and, since $\varphi^n(j)=(j+1)\cdot 2^n-1$ for all $n \in \NN$ and $j \in \NN_0$, by \eqref{eq:C^n} we have that
		\begin{equation}\label{eq:C^n2}
			\hat{C_{\varphi}}^n(x) = \left( \sum_{k=j\cdot 2^n}^{(j+1)\cdot 2^n-1} x_k \right)_{j\in\NN_0} \quad \text{ for each } x=(x_j)_{j\in\NN_0} \in c_0(\NN_0) \text{ and each } n \in \NN.
		\end{equation}
		Again by Corollary~\ref{Cor:norm} it follows that $\|C_{\varphi}^n\|_{\Lc} = \|\hat{C_{\varphi}}^n\|_{\infty}=2^n$ and hence $r(C_{\varphi})=r(\hat{C_{\varphi}}) = 2$. In addition, given any $\lambda \in 2\DD\setminus\{0\}$ we can consider the sequence $x^{(\lambda)} = (x^{(\lambda)}_j)_{j\in\NN_0}$ with
		\[
		x^{(\lambda)}_0=1 \quad \text{ and } \quad x^{(\lambda)}_{2^j} = x^{(\lambda)}_{2^j+1} = ... = x^{(\lambda)}_{2^{j+1}-1} = \left(\tfrac{\lambda}{2}\right)^j \cdot (\lambda-1) \quad \text{ for each } j\geq 0. 
		\]
		Since $|\lambda|<2$ it is obvious that $x^{(\lambda)} \in c_0(\NN_0)$, and the equality $\hat{C_{\varphi}}(x^{(\lambda)})=\lambda x^{(\lambda)}$ follows from a direct application of \eqref{eq:C^n2} with $n=1$. The reader may now compare these simple arguments, based on the existence of the isometrically isomorphic topological conjugacy exhibited in Remark~\ref{Rem:2-conjugacy}, with the complex calculations done in \cite[Example~4.6]{ColonnaMar2025_MJM_composition}.
		
		\item Note that $\dist(\varphi(j),\varphi(j-1))=2$ for every $j \in \NN$. Hence, by statement (v) of Theorem~\ref{The:NN_0}, the composition operator $C_{\varphi}:\Lc(\NN_0)\longrightarrow\Lc(\NN_0)$ satisfies the Frequent Hypercyclicity Criterion. In particular it is frequently hypercyclic, Devaney chaotic and mixing.
	\end{enumerate}
	The infinite-matrix form of the operator $\hat{C_{\varphi}}$ considered in Example~\ref{Exa:first} when it acts on $c_0(\NN_0)$ is
	\[
	A_{\varphi} =
	\begin{matrix}
		\text{\tiny $i=0$} \\ \text{\tiny $i=1$} \\ \text{\tiny $i=2$} \\[1pt] \vdots \\[1pt] \text{\tiny $i$} \\ \vdots
	\end{matrix}
	\begin{pmatrix}
		\overset{\text{\tiny $j=0$}}{1} & \overset{\text{\tiny $j=1$}}{1} & \overset{\text{\tiny $j=2$}}{0} & \overset{\text{\tiny $j=3$}}{0} & \overset{\text{\tiny $j=4$}}{0} & \overset{\text{\tiny $j=5$}}{0} & 0 & 0 & \overset{\text{\tiny $2i$}}{0} & \overset{\text{\tiny $2i+1$}}{0} & 0 & \cdots \\
		0 & 0 & 1 & 1 & 0 & 0 & 0 & 0 & 0 & 0 & 0 & \cdots \\
		0 & 0 & 0 & 0 & 1 & 1 & 0 & 0 & 0 & 0 & 0 & \cdots \\
		\vdots & \vdots & \vdots & \vdots & \vdots & \vdots & \ddots & \ddots & \vdots & \vdots & \vdots & \cdots\\
		0 & 0 & 0 & 0 & 0 & 0 & 0 & 0 & 1 & 1 & 0 & \cdots \\
		\vdots & \vdots & \vdots & \vdots & \vdots & \vdots & \vdots & \vdots & \vdots & \vdots & \ddots & \ddots
	\end{pmatrix},
	\]
	with two consecutive $1$'s in the columns $j=2i$ and $j=2i+1$ of the $i$-th row for $i \in \NN_0$.
\end{example}

\section{Further comments and future lines of research}\label{Sec_4:conclusions}

The main objective of this paper was to exhibit simple examples of frequently hypercyclic composition operators acting on the little Lipschitz space of a rooted tree. To do so, we established in Section~\ref{Sec_2:preliminar} an isometric isomorphism between the Lipschitz-type spaces under consideration and the corresponding sequence spaces $\ell^{\infty}$ and $c_0$. We then used this identification to obtain alternative expressions for the operators studied. In the present section, we further explore the scope of this isometric representation and we also formulate several open problems about the operators we considered in this paper.

We proceed by examining additional applications of the isometry established in Section~\ref{Sec_2:preliminar}. Actually, we recall that for a directed tree $T$ with a root $\zero \in T$ as defined in Section~\ref{Sec_2:preliminar}: in~\cite{CoEas2010}, the spaces considered in this paper $\Lc(T)$ and $\Lc_0(T)$ were introduced for the first time; in~\cite{ACoEas2013}, it was defined the so-called {\em weighted Lipschitz space} $\Lc_{\sbf{w}}(T)$ and the respective {\em little space} $\Lc_{\sbf{w},0}(T)$; and in~\cite{ACoEas2012}, a generalization called the {\em iterated logarithmic} (and the respective {\em little}) {\em Lipschitz spaces} appeared. If now we let $X$ be any of these Lipschitz spaces, and $X_0$ the associated little space, in each case there exists a concrete sequence $c = (c_{v})_{v\in T}$ of positive real numbers with $c_{\zero}=1$ for which
\[
X = \left\{ f:T\longrightarrow\CC \ ; \ \sup_{v \in T^*} \ c_v|f(v)-f(v^-)| < \infty \right\},
\]
and
\[
X_0 = \left\{ f:T\longrightarrow\CC \ ; \ \lim_{|v|\to\infty} \ c_v|f(v)-f(v^-)| = 0 \right\},
\]
both of them endowed with the norm
\[
\|f\| := |f(\zero)| + \sup_{v \in T^*} \ c_v|f(v)-f(v^-)| \quad \text{ for each } f \in X.
\]
It is not hard to check that, if one considers the equivalent norm
\[
\|f\|_X := \max\left\{ |f(\zero)| , \sup_{v \in T^*} \ c_v|f(v)-f(v^-)| \right\} \quad \text{ for each } f \in X,
\]
then the maps $D:(X,\|\cdot\|_X)\longrightarrow(\ell^{\infty}(T,c),\|\cdot\|_{\infty,c})$ and $D:(X_0,\|\cdot\|_X)\longrightarrow(c_0(T,c),\|\cdot\|_{\infty,c})$ defined as in Remark~\ref{Rem:1-isom} are again isometric isomorphisms, where
\[
\ell^{\infty}(T,c) := \left\{ x=(x_v)_{v\in V} \in \CC^{T} \ ; \ \sup_{v \in T} \ c_v|x_v| < \infty \right\}
\]
and
\[
c_0(T,c) := \left\{ x=(x_v)_{v\in V} \in \CC^{T} \ ; \ \lim_{|v|\to\infty} \ c_v|x_v| = 0 \right\}
\]
are the classical weighted $\ell^{\infty}$ and $c_0$ spaces as defined in \cite[Section~4.1]{GrPe2011_book}, \cite{GrPa2023} and \cite{GrPa2023_arXiv}, both of them endowed with the {\em weighted supremum norm}
\[
\|x\|_{\infty,c} := \sup_{v \in T} \ c_v|x_v| \quad \text{ for each } x=(x_v)_{v\in V} \in \ell^{\infty}(T,c).
\]
In addition, these sequence spaces are isometrically isomorphic to $(\ell^{\infty}(T),\|\cdot\|_{\infty})$ and $(c_0(T),\|\cdot\|_{\infty})$ as defined in Remark~\ref{Rem:1-isom}. See \cite[pages 96 and 353]{GrPe2011_book} for further details.

As a consequence of these observations, although these Lipschitz-type spaces over rooted trees were originally motivated by the well-known and extensively studied \textit{Bloch space} (formed by holomorphic functions whose derivative grows no faster than the density of the Poincar\'e metric; see \cite[Section~1]{CoEas2010}), the isometric identification established above shows that they can be naturally realised as concrete representations of the classical sequence spaces $\ell^{\infty}$ and $c_0$. From this perspective, several results obtained in \cite{ACoEas2011,ACoEas2012,ACoEas2013,ACoEas2014,CoEas2010,CoEas2012,CoMar2017,ColonnaMar2025_MJM_composition,MarRi2020} admit alternative proofs, and in some cases more direct arguments, by means of the correspondence described after Remark~\ref{Rem:1-isom}. In particular, the identities
\[
D(\chi_{S_w}) = e^{(w)}, \quad 
D^{-1}(e^{(w)}) = \chi_{S_w}, \quad 
D(\chi_{\{w\}}) = \dx^{(w)}, \quad 
D^{-1}(\dx^{(w)}) = \chi_{\{w\}},
\]
for each $w \in T$, allow one to transfer well-known structural properties of the classical spaces $\ell^{\infty}$~and~$c_0$ or about their duals to the Lipschitz-type spaces under consideration.

On the other hand, the study of operators on these Lipschitz-type spaces presents several natural features of interest. First, such operators are typically defined through simple symbols (such as $\psi$ for multiplication or $\varphi$ for composition operators, see \cite{CoEas2010,CoMar2017,ColonnaMar2025_MJM_composition}), which makes their formulation particularly transparent. Secondly, via the map $D$, these operators admit equivalent representations as operators acting on the classical sequence spaces $\ell^{\infty}$ and $c_0$, as illustrated in Sections~\ref{Sec_2:preliminar} and~\ref{Sec_3:existence}. This correspondence makes it possible to analyse and classify them in terms of shift, multiplication and composition operators on sequence spaces. The availability of different, yet equivalent, representations of the same operator often provides additional structural insight, as the examples and arguments developed in Sections~\ref{Sec_2:preliminar} and~\ref{Sec_3:existence} demonstrate. Thus, it appears natural to continue the systematic study of the operators considered so far on these Lipschitz-type spaces over rooted trees, and to extend the analysis to related classes, such as weighted composition or weighted shift operators.

From the perspective of Linear Dynamics, many natural problems can be posed. For the rest of the paper we leave some concrete open questions regarding the composition operators $C_{\varphi}$ considered in this paper, but also for their complex multiples $\lambda C_{\varphi}$ with $\lambda \in \CC$. We start asking about the possibility of distinguishing some of the main dynamical properties considered in the literature:

\begin{problem}\label{P:mix_chaos_freq}
	Can the composition operators (or their complex multiples) acting on little Lipschitz spaces be used to distinguish the notions of mixing, Devaney chaos and frequent hypercyclicity?
\end{problem}

Since the little Lipschitz spaces are isomorphic to certain $c_0$-spaces as argued above, Problem~\ref{P:mix_chaos_freq} seems a relevant question because the first operator that distinguished frequent hypercyclicity from chaos was a very specific backward shift acting on $c_0(\NN_0)$. This backward shift is frequently hypercyclic, but neither mixing nor chaotic (see \cite[Section~5]{BaGri2007} and then \cite{GriMaMe2021_book} for examples of the converse).

\begin{remark}
	We include in this remark what we known about Problem~\ref{P:mix_chaos_freq}:
	\begin{enumerate}[(a)]
		\item The hypercyclic composition operators considered in this paper fulfill the Frequent Hypercyclicity Criterion, so that they can not be used to solve Problem~\ref{P:mix_chaos_freq}.
		
		\item About the composition operators considered in \cite{ColonnaMar2025_MJM_composition} and the two criteria given there:
		\begin{enumerate}[--]
			\item The proof of \cite[Theorem~6.1]{ColonnaMar2025_MJM_composition}, combined with that of \cite[Proposition~6.2]{ColonnaMar2025_MJM_composition}, can be strengthened to show that the symbols $\varphi$ fulfilling the hypothesis of \cite[Theorem~6.1]{ColonnaMar2025_MJM_composition} are such that the operator $\lambda C_{\varphi} : \Lc_0(T)\longrightarrow\Lc_0(T)$ satisfies the Frequent Hypercyclicity Criterion for all $|\lambda|>1$. In particular, these operators $\lambda C_{\varphi}$ can not be used to solve Problem~\ref{P:mix_chaos_freq}.
			
			\item The hypothesis in \cite[Theorem~6.6]{ColonnaMar2025_MJM_composition} seem less demanding that those of \cite[Theorem~6.1]{ColonnaMar2025_MJM_composition}, and it is not completely clear if satisfying such result implies or not satisfying the Frequent Hypercyclicity Criterion. Clarifying this fact could be the starting point to solve Problem~\ref{P:mix_chaos_freq}. 
		\end{enumerate}
	\end{enumerate}
	In addition, the notion of (usual) hypercyclicity for composition operators and their multiples has not been distinguished from that of mixing. It could be interesting to start by distinguishing them. One should probably consider trees more sophisticated than $\NN_0$ to address these problems. 
\end{remark}

As a second question, and in view of \cite[Proposition~6.2,~Corollary~6.3~and~Theorem~6.6]{ColonnaMar2025_MJM_composition}, we would like to propose the following:

\begin{problem}\label{P:finite_preimage}
	Let $C_{\varphi}$ be a composition operator on a little Lipschitz space $\Lc_0(T)$ and assume that~$\lambda C_{\varphi}$ is hypercyclic for some $|\lambda|\geq 1$. Is $\{ n \in \NN \ ; \ \varphi^{-n}(\{v\})\neq\emptyset \}$ a finite set for every vertex $v \in T$?
\end{problem}

By \cite[Proposition~5.3]{ColonnaMar2025_MJM_composition} we have that every symbol $\varphi$ fulfilling that $\lambda C_{\varphi}$ is hypercyclic, for some $\lambda \in \CC$, is an injective map. Note that, given any vertex $v \in T$ and any positive integer $n \in \NN$, we only have two possibilities for the set $\varphi^{-n}(\{v\}):=\{ w \in T \ ; \ \varphi^n(w)=v \}$ when $\varphi$ is injective:
\begin{enumerate}[--]
	\item we either have $\varphi^{-n}(\{v\})=\{u\}$ for some vertex $u \in T$;
	
	\item or we have that $\varphi^{-n}(\{v\})$ is empty.
\end{enumerate}
Thus, when $\varphi$ is injective, it follows that ``\textit{the set $\{ n \in \NN \ ; \ \varphi^{-n}(\{v\})\neq\emptyset \}$ is finite for every $v \in T$}'' if and only if ``\textit{the set $\bigcup_{n\in\NN} \varphi^{-n}(\{v\})$ is finite for every $v \in T$}''. Conversely, but also when $\varphi$ is injective, given any vertex $v \in T$ we have that
\[
\text{\textit{the set $\bigcup_{n\in\NN} \varphi^{-n}(\{v\})$ is infinite}} \quad \text{ if and only if } \quad \text{\textit{$\{ n \in \NN \ ; \ \varphi^{-n}(\{v\})\neq\emptyset \}=\NN$}}.
\]
Our conjecture is that Problem~\ref{P:finite_preimage} has a positive answer. In fact, if $\{ n \in \NN \ ; \ \varphi^{-n}(\{v\})\neq\emptyset \} = \NN$ for a certain vertex $v \in T$, it seems that all the $\lambda C_{\varphi}$-orbits of the functions in $\Lc_0(T)$ that are near enough to the characteristic function $\chi_{\{v\}} \in \Lc_0(T)$ would hardly approach the zero-vector of the underlying space for $|\lambda|\geq 1$. However, we can not discard the possibility of constructing a sufficiently complex map~$\varphi$, acting on a sufficiently complex tree~$T$, to answer Problem~\ref{P:finite_preimage} in the negative.

Although the mentioned condition ``\textit{the set $\{ n \in \NN \ ; \ \varphi^{-n}(\{v\})\neq\emptyset \}$ is finite for every $v \in T$}'' was relaxed in \cite[Theorem~6.6]{ColonnaMar2025_MJM_composition}, all the hypercyclic operators of the type $\lambda C_{\varphi}:\Lc_0(T)\longrightarrow\Lc_0(T)$ provided by the authors in \cite{ColonnaMar2025_MJM_composition} fulfilled such condition. Also, every map $\varphi:\NN_0\longrightarrow\NN_0$ considered here in Section~\ref{Sec_3:existence} fulfills that the set $R_j := \{ n \in \NN \ ; \ \varphi^{-n}(\{j\})\neq\emptyset \}$ is finite for every $j \in \NN_0$. Indeed, since~$\varphi$ was assumed to be strictly increasing with $\varphi(0)>0$, it follows that the cardinality of the set~$R_j$ is at most $j$. In view of these comments, we consider worth it to provide an example of composition operator $C_{\varphi}$ whose symbol $\varphi$ fulfills that the set $\{ n \in \NN \ ; \ \varphi^{-n}(\{v\})\neq\emptyset \}=\NN$ for some $v \in T$ but such that $\lambda C_{\varphi}$ is hypercyclic for some $\lambda \in \CC$, with $|\lambda|<1$ in view of Problem~\ref{P:finite_preimage}:

\begin{example}
	Consider the rooted tree $T$ indexed by $\ZZ$, with $0 \in \NN_0$ as its root, and where two vertices $k,l \in \ZZ$ are adjacent if and only if $|k-l|=1$. Consider now the map
	\[
	\varphi(j) :=
	\begin{cases}
		j+1, & \text{ if } j<0, \\[5pt]
		2j+1, & \text{ if } j\geq 0.
	\end{cases}
	\]
	Note that $\varphi$ is Lipschitz with constant $L_{\varphi}=2$ and that $\lim_{|j|\to\infty} |\varphi(j)| = \infty$, which shows that $C_{\varphi}$ is bounded in $\Lc_0(\ZZ)$ by \cite[Theorem~3.1]{ColonnaMar2025_MJM_composition}. Moreover, we have the equality
	\[
	\bigcup_{n\in\NN} \varphi^{-n}(\{0\}) = \{ -n  \ ; \ n \in \NN \} \quad \text{ which shows that } \quad \{ n \in \NN \ ; \ \varphi^{-n}(\{0\})\neq\emptyset \} = \NN.
	\]
	This is an infinite set, contrary to what happened for the symbols considered in Section~\ref{Sec_3:existence}. Note also that $\{ n \in \NN \ ; \ \varphi^{-n}(\{j\})\neq\emptyset \} = \NN$ for each negative integer $j \in \ZZ\setminus\NN_0$ and for the positive integers of the form $j=2^n-1$ with $n \in \NN$. One can check that $\varphi$ fulfills the assumptions of \cite[Theorem~6.6]{ColonnaMar2025_MJM_composition} for every $\lambda \in \CC$ with $\tfrac{1}{2}<|\lambda|<1$ (consider $c=1$ in \cite[Theorem~6.6,~Condition~(2)]{ColonnaMar2025_MJM_composition}). It follows that the operator $\lambda C_{\varphi} : \Lc_0(\ZZ)\longrightarrow\Lc_0(\ZZ)$ is mixing whenever $\tfrac{1}{2}<|\lambda|<1$.
\end{example}

For the next and last questions we use the {\em essential spectrum}. It is known that a weakly-mixing bounded linear operator $A:X\longrightarrow X$ on a separable Banach space $X$ admits a {\em hypercyclic subspace} (i.e.\ a closed infinite-dimensional subspace formed by hypercyclic vectors but the zero-vector) if and only if its essential spectrum $\sigma_e(A)$ intersects the closed unit disk $\cl{\DD}$ (see \cite{Lopez2024_IMRN}). In particular, if $\ker(A)$ is infinite-dimensional, then $0 \in \sigma_e(A)\cap\cl{\DD}$ (see \cite[Corollary~7.1]{Lopez2024_IMRN}). For our composition operators it is not hard to check that if $T\setminus\varphi(T)$ is an infinite set, then $\ker(C_{\varphi})$ is infinite-dimensional. Indeed, note that $\{ \chi_{\{v\}} \ ; \ v \in T\setminus\varphi(T) \}$ is a linearly independent set contained in $\ker(C_{\varphi}) \subset \Lc_0(T)$. Thus, we can ask the following natural questions:

\begin{problem}\label{P:infinite}
	Let $C_{\varphi}$ be a composition operator on a little Lipschitz space $\Lc_0(T)$ and assume that~$\lambda C_{\varphi}$ is hypercyclic for some $\lambda \in \CC$ with $|\lambda|\leq 1$. Does it follow that $T\setminus\varphi(T)$ is an infinite set?
\end{problem}

\begin{problem}\label{P:subspace}
	Let $C_{\varphi}$ be a composition operator on a little Lipschitz space $\Lc_0(T)$ and assume that~$\lambda C_{\varphi}$ is hypercyclic for some $\lambda \in \CC$ with $|\lambda|\leq 1$. Does $\lambda C_{\varphi}$ admit a hypercyclic subspace?
\end{problem}

In Problems~\ref{P:infinite} and Problem~\ref{P:subspace} we have avoided the option $|\lambda|>1$ because the composition operator considered in Example~\ref{Exa:NN_0} fulfills that:
\begin{enumerate}[--]
	\item The map $\varphi : \NN_0\longrightarrow\NN_0$ is such that $\NN_0\setminus \varphi(\NN_0) = \{0\}$, i.e. $T\setminus\varphi(T)$ is finite in this case.
	
	\item The operator $\lambda C_{\varphi} : \Lc_0(\NN_0)\longrightarrow\Lc_0(\NN_0)$ can be checked to be hypercyclic and even mixing for every $\lambda \in \CC$ with $|\lambda|>1$ by applying \cite[Theorem~6.1]{ColonnaMar2025_MJM_composition}.
	
	\item The operator $\lambda C_{\varphi}$ does not admit a hypercyclic subspace when $|\lambda|>1$. Indeed, since $\hat{C_{\varphi}} = P_0 + B$ as argued in Example~\ref{Exa:NN_0}, we have that $\lambda C_{\varphi}$ is topologically conjugate via an isometric isomorphism to the compact perturbation $\lambda P_0 + \lambda B$ of the Rolewicz operator $\lambda B$. It is well-known that such operator does not admit a hypercyclic subspace (see \cite[Remark~5.9~and~Corollary~7.2]{Lopez2024_IMRN}).
\end{enumerate}
Our conjecture is that Problem~\ref{P:infinite} should have a positive answer, and this could probably be achieved using that the set $\{ |\lambda|^n \cdot \dist(\varphi^n(v),\varphi^n(v^-)) \ ; \ n \in \NN \}$ is unbounded whenever the operator $\lambda C_{\varphi}$ is hypercyclic by \cite[Proposition~5.4]{ColonnaMar2025_MJM_composition}. However, note that a positive answer to Problem~\ref{P:infinite} does not directly imply a positive answer to Problem~\ref{P:subspace}, unless one shows that every hypercyclic operator of the type $\lambda C_{\varphi}$ is weakly-mixing (see \cite[Proposition~7.9]{Lopez2024_IMRN} for more details).

To conclude, we would like to mention other possible lines of research apart from that related to hypercyclicity. In fact, recall that in \cite{CoMar2017} the authors showed that the multiplication operators acting on little Lipschitz spaces (but also their adjoints) can not be hypercyclic. Hence, we propose to study for these operators (and also for the shifts considered in \cite{MarRi2020}) other dynamical notions that have recently appeared in Linear Dynamics such as ({\em mean}) {\em Li-Yorke} and {\em distributional chaos} (see \cite{BerBoMarPe2011,BerBoPe2020}), {\em shadowing}, {\em hyperbolicity} and {\em structural stability} (see \cite{BerCiDarMePu2018,BerMe2021,CiGoPu2021}) or {\em recurrence} (see \cite{BoGrLoPe2022_JFA,GriLoPe2025_AMP,LoMe2025_JMAA}). The isomorphism and conjugacies established in this paper seem good tools to look into these topics.

\section*{Funding}

The author of this paper was partially supported by MCIN/AEI/10.13039/501100011033/FEDER, UE, Project PID2022-139449NB-I00.

\section*{Acknowledgments}

The author would like to thank the anonymous reviewer for the careful comments received, which have significantly improved the paper.

{\footnotesize

}

{\footnotesize
$\ $\\

\textsc{Antoni L\'opez-Mart\'inez}: Universitat Polit\`ecnica de Val\`encia, Institut Universitari de Matem\`atica Pura i Aplicada, Edifici 8E, 4a planta, 46022 Val\`encia, Spain. e-mail: alopezmartinez@mat.upv.es

}

\end{document}